\documentclass{amsart} 
\author{Andy Hammerlindl}
\title{On expanding foliations}

\usepackage{amsmath}
\usepackage{amssymb}
\usepackage{amsfonts}
\usepackage{amsthm}
\usepackage{graphicx}

\newcommand{\R}{\mathbb{R}}
\newcommand{\Z}{\mathbb{Z}}
\newcommand{\N}{\mathbb{N}}
\newcommand{\T}{\mathbb{T}}
\newcommand{\bbT}{\mathbb{T}}
\newcommand{\bbS}{\mathbb{S}}

\newcommand{\Es}{E^s}
\newcommand{\Ec}{E^c}
\newcommand{\Eu}{E^u}

\newcommand{\Ws}{W^s}

\newcommand{\Wu}{W^u}

\newcommand{\inv}{^{-1}}

\newcommand{\tilf}{\tilde f}
\newcommand{\tilg}{\tilde g}
\newcommand{\tilp}{\tilde p}

\newcommand{\tilM}{\tilde M}
\newcommand{\tilN}{\tilde N}
\newcommand{\tilW}{\tilde W}
\newcommand{\tilTM}{\widetilde {T_1M}}
\newcommand{\tilphi}{\tilde{\varphi}}

\newcommand{\diam}{\operatorname{diam}}
\newcommand{\rad}{\operatorname{rad}}
\newcommand{\id}{\operatorname{id}}
\newcommand{\Diff}{\operatorname{Diff}}
\newcommand{\PH}{\operatorname{PH}}

\newtheorem{thm}{Theorem}[section]
\newtheorem{cor}[thm]{Corollary}
\newtheorem{lemma}[thm]{Lemma}
\newtheorem{prop}[thm]{Proposition}

\theoremstyle{remark}
\newtheorem*{remark} {\bf Remark}

\newtheorem*{notation} {\bf Notation}

\providecommand{\acknowledgement}{{\noindent\bf Acknowledgements}\quad}

\begin{document}

\begin{abstract}
    Certain families of manifolds which support Anosov flows do not support
    expanding, quasi-isometric foliations.
\end{abstract}
\maketitle


\section{Introduction}
This paper demonstrates that certain manifolds do not admit foliations which are
both expanding and whose leaves satisfy a form of quasi-isometry.
That is, if $M$ belongs to one of several families of manifolds listed in
the theorems below, it is impossible to find a diffeomorphism $f:M \to M$
and a foliation $W$ such that

\begin{itemize}
    \item
    $W$ is \emph{invariant}: $f(W)=W$,

    \item
    $W$ is \emph{expanding}: there is $\lambda > 1$ such that
    $\|Tf v\|  \ge  \lambda \|v\|$ for all $v \in T W$,


    \item
    $W$ is \emph{quasi-isometric}:
    letting $\tilW$ denote the lift of $W$ to the universal cover $\tilM$, 
    there is a global constant $Q > 1$ such that
    $d_{\tilW}(x,y) < Q\ d_{\tilM}(x,y) + Q$
    for all $x$ and $y$ on the same leaf of $\tilW$.
\end{itemize}
A major motivation for investigating expanding, quasi-isometric foliations is
the study of partially hyperbolic systems, diffeomorphisms of the form $f:M \to
M$ with an invariant splitting $TM=\Eu \oplus \Ec \oplus \Es$ such that the
\emph{unstable} $\Eu$ subbundle is expanding under $Tf$, the \emph{stable} $\Es$
is contracting, and the \emph{center} $\Ec$ neither expands as much as $\Eu$ nor
contracts as much as $\Es$.
In general, partially hyperbolic systems are difficult to analyze and
classify.  In the case where the foliations $\Wu$ and $\Ws$ tangent to $\Eu$ and
$\Es$ are quasi-isometric, the situation is much improved.  Under such an
assumption, the center subbundle $\Ec$ is uniquely integrable \cite{Brin}, which
is not true in general \cite{BW-dc}.  Moreover, the system enjoys a form of
structural stability \cite{Hammerlindl}.
Any partially hyperbolic system on the 3-torus must have quasi-isometric
invariant foliations \cite{BBI2}, and this has been used to give a
classification for these systems \cite{ham-thesis}.  Both the establishment of
quasi-isometry and the resulting classification can be extended to 3-manifolds
with nilpotent fundamental group \cite{Parwani,ham-nil}.
Further results hold in higher dimensions \cite{ham-conseq,ham-pgps}.

In light of the results cited above, a natural approach to analyze partially
hyberbolic systems on a given manifold is to first establish quasi-isometry of
the invariant foliations, and then use this to prove further properties of the
system.  This paper shows that for many manifolds supporting partially
hyperbolic diffeomorphisms, this approach is impossible.

\begin{thm} \label{main}
    A closed manifold does not support an expanding quasi-isometric foliation
    if it is:
    \begin{enumerate}
        \item
        a $d$-dimensional Riemannian manifold of constant negative curvature
        where $d  \ge  3$,
        \item
        the unit tangent bundle of
        a $d$-dimensional Riemannian manifold of constant negative curvature
        where $d  \ge  3$, or
        \item
        the suspension of a hyperbolic toral automorphism.
    \end{enumerate}  \end{thm}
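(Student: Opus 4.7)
The strategy is to argue by contradiction: assume $M$ belongs to one of the three families and that $f : M \to M$ and $W$ satisfy the three bullet-listed properties, lift everything to $\tilM$, and derive a geometric contradiction from the structure of the universal cover. Write $\Gamma = \pi_1(M)$ for the deck group, $\tilW$ for the lifted foliation, $\tilf$ for a lift of $f$, and let $\sigma(\gamma) := \tilf \gamma \tilf\inv$ denote the induced automorphism of $\Gamma$.

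The main construction, shared across all three cases, produces a bi-infinite quasi-geodesic of $\tilM$ contained in a single leaf of $\tilW$. Fix $x \in \tilM$ and a short leaf-geodesic arc $c : [-\epsilon,\epsilon] \to L_x$ centered at $x$. Expansion forces the intrinsic leaf-length of $\tilf^n(c)$ to be at least $2 \lambda^n \epsilon$, and the quasi-isometry condition then forces its endpoints to lie at ambient distance at least $2 \lambda^n \epsilon / Q - 1$ from its midpoint. Choosing deck elements $\alpha_n \in \Gamma$ so that $\alpha_n \tilf^n(x)$ stays in a fixed compact fundamental domain, and parametrizing by leaf arc length, an Arzela--Ascoli diagonal argument gives a subsequential limit $\gamma_\infty$ which is a bi-infinite $(Q,Q)$-quasi-geodesic of $\tilM$ contained in a single leaf of $\tilW$.

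For cases (1) and (2), $\tilM$ is quasi-isometric to $\mathbb{H}^d$ with $d \ge 3$, and the Morse lemma attaches to $\gamma_\infty$ two distinct endpoints on the sphere at infinity $S^{d-1}$. Varying the basepoint $x$ and the choice of initial direction in $T_x \tilW$ promotes this to a $\sigma$-equivariant assignment from (direction-marked) leaves of $\tilW$ to pairs of points in $S^{d-1}$. Combining the cocompactness of the image of $\Gamma$ in $\operatorname{Isom}(\mathbb{H}^d)$, the minimality of its action on $S^{d-1}$, and the absence of low-complexity $\Gamma$-invariant structures on $S^{d-1}$ for $d \ge 3$, this equivariant data should be shown to be incompatible with the dilation that $\sigma$ must induce on nearby endpoint pairs because of expansion. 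For case (3), $\tilM \cong \R^{n+1}$ carries a Sol-type metric built from $A$, and the quasi-geodesic $\gamma_\infty$ must asymptote to an eigendirection of $A$; combined with the rigid form of $\operatorname{Aut}(\Z^n \rtimes_A \Z)$ (all automorphisms permute the stable/unstable eigenspaces of $A$ up to a power), this should force $T\tilW$ to contain a fixed $A$-eigendistribution, contradicting either the $f$-invariance of $\tilW$ or the expansion hypothesis.

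The principal difficulty is the final step in each case: translating the purely qualitative statement ``a leaf contains an undistorted bi-infinite quasi-geodesic'' into a concrete algebraic impossibility. For (1) and (2) this requires combining minimality of $\Gamma \curvearrowright S^{d-1}$ with careful control of how $\sigma$ dilates the endpoint pairs produced by the expansion construction. For (3) it requires both an explicit description of quasi-geodesics in Sol-type geometries and the classification of outer automorphisms of $\Z^n \rtimes_A \Z$ relative to the eigenspace decomposition of $A$.
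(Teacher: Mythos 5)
There is a genuine gap, and it sits exactly where you flag ``the principal difficulty'': in all three cases your endgame is left at the level of ``should be shown,'' and the route you sketch cannot be completed, because the existence of a bi-infinite ambient quasi-geodesic inside a leaf is not, by itself, contradictory on these manifolds. Indeed, quasi-isometry alone already makes every complete leaf geodesic a $(Q,Q)$-quasi-geodesic of $\tilM$ (for a unit-speed leaf geodesic, $d_{\tilM}(x,y) \ge (d_{\tilW}(x,y)-Q)/Q = (|t-s|-Q)/Q$), so your Arzela--Ascoli construction uses no dynamics at all; worse, renormalizing by the deck elements $\alpha_n$ discards precisely the quantitative growth that expansion provides. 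The paper points out that in cases (2) and (3) the orbit foliations of the geodesic and suspension Anosov flows \emph{are} quasi-isometric, so whole foliations by ambient quasi-geodesics exist there. This also falsifies your key claim in case (3): vertical lines (the suspension flow direction) are genuine geodesics of the Sol-type metric on $\R^d \times \R$ and asymptote to no eigendirection of $A$ in the fiber, so a leafwise quasi-geodesic need not ``asymptote to an eigendirection of $A$.'' In cases (1)--(2), the automorphism $\sigma = f_*$ is realized, via Mostow rigidity, by an \emph{isometry}, whose boundary action on $S^{d-1}$ is conformal; there is no invariant sense in which $\sigma$ ``dilates'' endpoint pairs, so the minimality/equivariance route has no contradiction to converge to.

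The paper's actual mechanism is a displacement-growth comparison that your limit construction throws away. Expansion plus quasi-isometry forces $d_{\tilM}(\tilf^n(x),\tilf^n(y))$ to grow \emph{exponentially} for distinct points $x,y$ on one lifted leaf; rigidity of $\pi_1(M)$ then caps this growth. In cases (1) and (2), Mostow rigidity gives an isometry $g$ with $g_* = f_*$ (in case (2) after projecting along the fibration $T_1M \to M$, using $\pi_1(\bbS^{d-1}) = 0$ for $d \ge 3$), and a compactness lemma gives $\sup_x d(\tilf(x),\tilg(x)) < C$, whence $d(\tilf^n(x),\tilf^n(y)) \le d(x,y) + Cn$ --- linear growth. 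In case (3), a word-metric computation in $G = \Z^d \rtimes_A \Z$, where $\Z^d$ is exponentially distorted (the word length of $x \cdot z^k$ is $O(\log\|x\| + |k|)$, proved via the contraction of the Anosov suspension flow), together with the classification of $\operatorname{Aut}(G)$, shows that iterating any automorphism interleaved with bounded word-metric neighbourhoods grows diameters only polynomially (the group is ``unstretchable''), so $\diam \tilf^n(K)$ is polynomial for any bounded $K \subset \tilM$. Either bound contradicts the exponential lower bound, with no boundary theory needed. To salvage your approach you would have to reinstate exactly this quantitative comparison between the exponential leafwise separation and a subexponential bound on $\tilf^n$ coming from $f_*$, rather than pass to a renormalized limit curve.
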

Many examples of partially hyperbolic systems come from the time-one maps of
Anosov flows, and 
a classic example of an Anosov flow is the geodesic flow on a
negatively curved manifold $M$. This flow is defined on the unit tangent
bundle $T_1M$ as in case (2) above.
Another example of an Anosov flow is the suspension of an Anosov
diffeomorphism.  If the diffeomorphism is defined on a torus $\bbT^d$, it
corresponds to case (3).
It is conjectured that every codimension one Anosov flow in dimension
$d  \ge  4$ is of this form \cite{ghys-codim}.
Note that Theorem \ref{main} is not specific to the case of foliations coming
from Anosov flows.
In fact, it is easy to show that no Anosov flow (on any manifold) can
have a quasi-isometric strong stable or unstable foliation.

In his original paper on the subject, Fenley showed that certain manifolds do
not permit quasi-isometric codimension one foliations \cite{Fenley}.  This
paper considers foliations of any codimension with the additional condition of
expanding dynamics.  This extra condition is needed as in cases (2) and (3),
the orbits of the Anosov flows mentioned above give one-dimensional
quasi-isometric foliations.


\medskip

The proof of Theorem \ref{main} relies on analyzing the fundamental group of
the manifold, and the following generalization holds.

\begin{thm} \label{fundy}
    A closed manifold does not support an expanding quasi-isometric foliation
    if its fundamental group is isomorphic to the fundamental group of a
    manifold listed in Theorem \ref{main}.
\end{thm}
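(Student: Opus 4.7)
The plan is to reduce Theorem \ref{fundy} to Theorem \ref{main} by working at the level of the fundamental group. Let $N$ be a closed manifold with $\pi_1(N)\cong\pi_1(M)=:\Gamma$ for some $M$ as in Theorem \ref{main}, and suppose for contradiction that some $f\colon N\to N$ admits an invariant, expanding, quasi-isometric foliation $W$. By the Milnor--Schwarz lemma, both $\tilN$ and $\tilM$ are quasi-isometric to a Cayley graph of $\Gamma$, so fix a quasi-isometry $\Phi\colon\tilN\to\tilM$ intertwining the two deck actions up to bounded error. The aim is to transport the coarse structure coming from $\tilW$ across $\Phi$ and obtain on $\tilM$ precisely the obstruction already supplied by Theorem \ref{main}.

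The execution proceeds in four steps. First, lift $W$ to $\tilW$ on $\tilN$; by the quasi-isometry hypothesis each leaf $L$ is a quasi-isometrically embedded subset of $\tilN$ with uniform constants. Second, push these forward by $\Phi$ to obtain a family $\{\Phi(L)\}$ of uniformly quasi-isometrically embedded subsets of $\tilM$ that is $\Gamma$-invariant up to bounded Hausdorff error. Third, the lift $\tilf$ induces an automorphism $f_*\in\operatorname{Aut}(\Gamma)$, permutes the leaves of $\tilW$, and expands along them; conjugating by $\Phi$ yields a coarse map $\bar f\colon\tilM\to\tilM$ that induces the same $f_*$, coarsely permutes $\{\Phi(L)\}$, and coarsely expands each of them. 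Fourth, apply the arguments of Theorem \ref{main}: since those are, by design, analyses of the action of $\Gamma$ on $\tilM$ — controlled by its Gromov boundary in the hyperbolic cases and by its asymptotic structure in the suspension case — they apply to the transported data and yield the desired contradiction.

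The main obstacle is the third step: on $\tilN$ the dynamics is a smooth, fibrewise expansion, whereas on $\tilM$ only the coarse surrogate $\bar f$ is available. The reduction works only if the proof of Theorem \ref{main} uses expansion of $f$ solely in a coarse-metric sense — namely that leaf diameters grow by a definite multiplicative factor under iteration, up to bounded additive error — and never uses the differentiable structure of $f$ in any essential way. Verifying that the proof of Theorem \ref{main} indeed has this coarse-invariance property, or rewriting it so that it does, is therefore the principal task; once done, Theorem \ref{fundy} follows from the geometric-group-theoretic dictionary outlined above.
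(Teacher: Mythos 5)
There is a genuine gap: your fourth step --- ``apply the arguments of Theorem \ref{main} to the transported data'' --- is the entire mathematical content, and you explicitly defer it (``verifying\dots or rewriting it so that it does, is therefore the principal task''). The paper resolves exactly this point, but not by coarse transport: it reformulates the obstruction so that it lives in the group from the start. Proposition \ref{growpoly} isolates a purely group-theoretic property (``unstretchable'': for every automorphism $\phi$ and every $N$, the sequence $A_{k+1}=U_N(\phi(A_k))$ grows at most polynomially in word-metric diameter) and shows it forces $\diam \tilf^n(K)$ to grow at most polynomially on \emph{any} manifold with that fundamental group; the expansion hypothesis is only ever used on $\tilN$ itself (via the first Lemma of Section 2, giving exponential growth of $d_{\tilN}(\tilf^n x,\tilf^n y)$ along leaves). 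The paper then verifies unstretchability for the suspension groups (Proposition \ref{toralstretch}, via the sets $B(\ell,h)$) and for hyperbolic-manifold groups (via finiteness of $\operatorname{Out}(G)$, a consequence of Mostow rigidity, reducing some power $\phi^p$ to an inner automorphism). With this formulation, Theorem \ref{fundy} is immediate: the hypothesis depends only on the isomorphism type of $\pi_1$, so your quasi-isometry $\Phi$, the transported leaf family $\{\Phi(L)\}$, and the coarse conjugate $\bar f$ are all unnecessary --- one never needs a polynomial bound on $\tilM$ at all, only on $\tilN$, and that comes straight from the group.

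Beyond the deferred step, your third step is also technically shaky as stated. The coarse conjugate $\bar f=\Phi\tilf\Phi^{-1}$ composes badly under iteration: additive errors are fed through the multiplicative quasi-isometry constants, so $\bar f^{\,n}$ need not remain at bounded (or even subexponential) distance from $\Phi\tilf^n\Phi^{-1}$, and ``coarsely expands each leaf'' under iteration does not follow automatically --- this matters precisely because the contradiction you seek is between exponential and polynomial scales. The iteration can be salvaged in the hyperbolic cases by comparing $\Phi\tilf$ once with $\tilg\Phi$ for the Mostow isometry $g$ realizing $f_*$, and then swapping repeatedly (errors only \emph{add} because the $\tilg^k$ are isometries, giving $d(\Phi\tilf^n(x),\tilg^n\Phi(x))\le Cn$); but in the suspension case no isometry is available, and the only known argument is the combinatorial one in the group --- which is the paper's approach, making the transport machinery redundant there as well.
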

The proof involves Mostow Rigidity and the techniques could be easily applied
to more general locally symmetric spaces.  For the benefit of those
dynamicists not well-versed in geometric group theory, this paper only
treats the specific case of hyperbolic manifolds.

As suggested by Ali Tahzibi, one could also consider \emph{non-uniformly}
expanding foliations and similar results hold under additional assumptions.
For the benefit of those geometers not well-versed in non-uniform
hyperbolicity, this discussion is left to the appendix.
\section{Preliminaries}
\begin{notation}
    A \emph{lift} of a function $f:M \to N$ is a choice of
    function $\tilf:\tilM \to \tilN$ such that $P_N \tilf = f P_M$ where $P_M:\tilM
    \to M$ and $P_N:\tilN \to N$ are the universal coverings.
    Viewing the fundamental group as the set of deck transformations on $\tilM$,
    $\tilf$ uniquely determines a group homomorphism $f_*: \pi_1(M) \to \pi_1(N)$
    which satifies $f_*(\alpha)\tilf(x)=\tilf(\alpha(x))$ for $x \in \tilM$ and
    $\alpha \in \pi_1(M)$.

\end{notation}
For a foliation to be expanding as defined above, we require that
the function $f:M \to M$ is $C^1$ and that each leaf of the foliation is
$C^1$ as a submanifold.
The
foliation itself need only be continuous, as is commonly the case for
foliations encountered when studying dynamical systems.
Also, since the proofs of Theorems \ref{main} and \ref{fundy} do not use the
fact that a foliation covers the entire manifold, the results also hold for
laminations in place of foliations.

The following is an immediate consequence of the definitions of expanding and
quasi-isometric.

\begin{lemma}
    If the foliation $W$ is quasi-isometric and expanding under $f:M \to M$
    then for a lift $\tilf:\tilM \to \tilM$ and distinct points $x$ and $y$ on the
    same leaf of the lifted foliation $\tilW$, the sequence
    $\{d_{\tilM}(\tilf^n(x),\tilf^n(y))\}$ grows exponentially.
\end{lemma}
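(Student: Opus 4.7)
The plan is to combine the two hypotheses in the natural way: the expanding property forces the intrinsic leaf distance $d_{\tilW}(\tilf^n(x), \tilf^n(y))$ to grow at least like $\lambda^n d_{\tilW}(x,y)$, while the quasi-isometry inequality pins $d_{\tilW}$ down by a linear function of $d_{\tilM}$. Inverting this second inequality to produce a lower bound on $d_{\tilM}(\tilf^n(x),\tilf^n(y))$ in terms of $d_{\tilW}(\tilf^n(x),\tilf^n(y))$ is then the whole content.

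First I would establish the leaf-wise expansion estimate. Fix $n \ge 0$ and let $\sigma$ be any $C^1$ path in the leaf of $\tilW$ from $\tilf^n(x)$ to $\tilf^n(y)$. Pulling $\sigma$ back by $\tilf^{-n}$ gives a $C^1$ path in the leaf through $x$ and $y$. Since $\|T\tilf\, v\| \ge \lambda \|v\|$ for every $v$ tangent to $\tilW$, the derivative of $\tilf^{-n}$ contracts tangent vectors to the foliation by at least $\lambda^{-n}$, so $\length(\tilf^{-n}(\sigma)) \le \lambda^{-n} \length(\sigma)$. Taking the infimum over $\sigma$ yields
\[
    d_{\tilW}(\tilf^n(x), \tilf^n(y)) \ \ge\ \lambda^n\, d_{\tilW}(x,y).
\]

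Next I would invoke quasi-isometry at the points $\tilf^n(x)$ and $\tilf^n(y)$, which lie on the same leaf of $\tilW$. This gives
\[
    d_{\tilW}(\tilf^n(x), \tilf^n(y)) \ <\ Q\, d_{\tilM}(\tilf^n(x), \tilf^n(y)) + Q.
\]
Combining the two inequalities and rearranging,
\[
    d_{\tilM}(\tilf^n(x), \tilf^n(y)) \ >\ \frac{\lambda^n d_{\tilW}(x,y) - Q}{Q}.
\]
Since $x \ne y$ lie on the same leaf, $d_{\tilW}(x,y) \ge d_{\tilM}(x,y) > 0$, so the right-hand side grows exponentially in $n$, which is the required conclusion.

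I do not expect any genuine obstacle here; the only subtle point is remembering that expansion on a leaf must be applied by pulling back an arbitrary connecting path rather than pushing forward a near-minimising one, since the image of a geodesic arc in the leaf need not be a geodesic arc and need only provide an upper bound on $d_{\tilW}(\tilf^n(x),\tilf^n(y))$ rather than a lower bound.
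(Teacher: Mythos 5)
Your proof is correct and is precisely the argument the paper has in mind: the paper states this lemma without proof, calling it ``an immediate consequence of the definitions,'' and your writeup --- leafwise expansion $d_{\tilde W}(\tilde f^n(x),\tilde f^n(y)) \ge \lambda^n d_{\tilde W}(x,y)$ obtained by pulling back connecting paths, followed by inverting the quasi-isometry inequality --- is exactly that immediate consequence spelled out. Your closing remark about pulling back an arbitrary path rather than pushing forward a minimizing one is the right way to handle the one genuinely delicate point.
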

If we can establish that for any homeomorphism $f:M \to M$ with lift
$\tilf:\tilM \to \tilM$ and any $x,y \in \tilM$, the sequence
$\{d(\tilf^n(x),\tilf^n(y))\}$ grows subexponentially, then there can be no
expanding quasi-isometric foliation on $M$.  This is the technique used to
prove Theorem \ref{main}.

\begin{lemma} \label{bdd}
    Let $M$ and $N$ be manifolds, $M$ be compact, and $f,g:M \to N$ be
    continuous functions with lifts $\tilf, \tilg: \tilM \to \tilN$ such that the
    induced homomorphisms $f_*,g_*:\pi_1(M) \to \pi_1(N)$ are equal.  Then, there
    is $C>0$ such that $d_{\tilN}(\tilf(x), \tilg(x)) < C$ for all $x \in \tilM$.
\end{lemma}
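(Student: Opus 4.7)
The plan is to show that the function $h : \tilM \to \R$ defined by $h(x) = d_{\tilN}(\tilf(x), \tilg(x))$ descends to a continuous function on the compact quotient $M$, from which boundedness is immediate.

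First I would check that $h$ is continuous: this follows from continuity of $\tilf$, $\tilg$, and the distance function on $\tilN$. Next, the main step is to verify that $h$ is invariant under the deck group action on $\tilM$. Given $\alpha \in \pi_1(M)$, using the defining relation $\tilf(\alpha x) = f_*(\alpha) \tilf(x)$ (and similarly for $\tilg$) together with the hypothesis $f_* = g_*$, I can set $\beta = f_*(\alpha) = g_*(\alpha)$ and compute
\[
h(\alpha x) = d_{\tilN}\bigl(\beta\, \tilf(x),\ \beta\, \tilg(x)\bigr) = d_{\tilN}\bigl(\tilf(x), \tilg(x)\bigr) = h(x),
\]
where the middle equality uses that $\beta \in \pi_1(N)$ acts on $\tilN$ by deck transformations, which are isometries of the lifted Riemannian metric.

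Since $h$ is invariant under the deck group of the covering $P_M : \tilM \to M$, it descends to a continuous function $\bar h : M \to \R$. As $M$ is compact, $\bar h$ attains a maximum, and choosing $C$ strictly larger than this maximum gives the required bound.

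The argument is essentially a bookkeeping exercise; the only point that requires a moment's care is matching the two deck transformations arising from $\tilf$ and $\tilg$, which is precisely where the hypothesis $f_* = g_*$ is used. I do not anticipate any genuine obstacle, as compactness of $M$ and the isometric action of deck transformations on $\tilN$ handle the rest.
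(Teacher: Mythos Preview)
Your proposal is correct and is essentially the same argument as the paper's proof: the paper also observes that $x \mapsto d_{\tilN}(\tilf(x),\tilg(x))$ is deck-transformation invariant, hence descends to the compact manifold $M$ and is bounded. You have simply spelled out the invariance computation and the role of the hypothesis $f_*=g_*$ in slightly more detail.
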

\begin{proof}
    The function $\tilM \to \R,\, x \mapsto d_{\tilN}(\tilf(x), \tilg(x))$ is
    invariant under deck transformations.  It descends to a function $M
    \to \R$ and is therefore bounded.
      \end{proof}
\begin{cor}
    If a foliation $W$ is quasi-isometric and expanding under $f:M \to M$ then
    the induced homomorphism $f_*$ is not equal to the identity.
\end{cor}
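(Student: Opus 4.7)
The plan is to argue by contradiction, combining Lemma \ref{bdd} with the previous lemma that forces exponential separation along leaves of a quasi-isometric expanding foliation. The idea is that if $f_* = \id$, then one can compare $\tilf$ to the identity map on $\tilM$, obtaining a uniform bound on how far $\tilf$ moves points; iterating this bound forces $\tilf^n$-orbits to separate at most linearly, which clashes with the exponential separation of points lying on a common leaf of $\tilW$.

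Concretely, assume for contradiction that $f_* : \pi_1(M) \to \pi_1(M)$ equals the identity, and choose a lift $\tilf : \tilM \to \tilM$ realizing this identification of the deck group. Apply Lemma \ref{bdd} with $N = M$, with the chosen $\tilf$, and with $g = \id_M$ lifted to $\tilg = \id_{\tilM}$. Since both induced homomorphisms equal $\id$, we obtain a constant $C > 0$ with $d_{\tilM}(\tilf(x), x) < C$ for every $x \in \tilM$. Substituting $x = \tilf^k(z)$ gives $d_{\tilM}(\tilf^{k+1}(z), \tilf^k(z)) < C$ for every $k$ and $z$, so by the triangle inequality $d_{\tilM}(\tilf^n(z), z) < nC$ for all $n \ge 0$.

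Now take any two points $x, y$ on the same leaf of $\tilW$. Applying the previous triangle-inequality bound to both $z = x$ and $z = y$ yields
\[
d_{\tilM}(\tilf^n(x), \tilf^n(y)) \le d_{\tilM}(\tilf^n(x), x) + d_{\tilM}(x,y) + d_{\tilM}(y, \tilf^n(y)) < 2nC + d_{\tilM}(x,y),
\]
so this distance grows at most linearly in $n$. However, by the preceding lemma, if $W$ is quasi-isometric and expanding then $d_{\tilM}(\tilf^n(x), \tilf^n(y))$ must grow exponentially for distinct $x, y$ on a common leaf. This contradiction forces $f_* \ne \id$.

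The only subtle point in this plan is the matching of lifts required to invoke Lemma \ref{bdd}: one must ensure that the specific lift $\tilf$ chosen for the exponential-growth lemma is the same one whose induced homomorphism is being compared to $\id$, and that $\id_{\tilM}$ is an admissible lift of $\id_M$. Both facts are immediate from the definitions, so I do not expect any real obstacle; the argument is essentially a one-line consequence of the two preceding lemmas.
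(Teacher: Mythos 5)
Your proposal is correct and follows essentially the same route the paper intends: the corollary is stated without proof precisely because it combines Lemma \ref{bdd} (applied with $g = \id_M$, $\tilg = \id_{\tilM}$) with the preceding exponential-separation lemma, exactly as you do, using the paper's observation that subexponential growth of $d_{\tilM}(\tilf^n(x),\tilf^n(y))$ rules out an expanding quasi-isometric foliation. Your attention to matching the lift $\tilf$ with the homomorphism $f_*$ is also the right (and only) point of care.
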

\begin{cor}
    No time-one map of an Anosov flow or perturbation thereof has a
    quasi-isometric strong stable or unstable foliation.
\end{cor}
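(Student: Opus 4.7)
The plan is to reduce the statement to the previous corollary by observing that the time-one map of a flow (and any perturbation of it) induces the identity homomorphism on the fundamental group. Concretely, if $\phi_t : M \to M$ is an Anosov flow, then the map $t \mapsto \phi_t$ is a continuous path of homeomorphisms from $\id = \phi_0$ to $\phi_1$, so the time-one map is homotopic to the identity. Consequently the induced homomorphism $(\phi_1)_* : \pi_1(M) \to \pi_1(M)$ equals the identity homomorphism, and the same holds for $(\phi_1^{-1})_* = (\phi_{-1})_*$.

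Next I would handle the perturbation clause. Any $C^1$ map $g$ sufficiently $C^0$-close to $\phi_1$ can be joined to $\phi_1$ through a homotopy (for instance, by a straight-line homotopy in a tubular neighborhood of the graph, or simply by noting that nearby continuous self-maps of $M$ are homotopic), so $g_* = (\phi_1)_*$ is the identity. The strong stable and unstable foliations of the Anosov flow are preserved (up to a topological conjugacy on leaves) by small perturbations in the partially hyperbolic setting, and the strong unstable foliation $\Wu$ is expanding under $g$ while the strong stable foliation $\Ws$ is expanding under $g^{-1}$; moreover $(g^{-1})_* = (g_*)^{-1}$ is also the identity.

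The conclusion then follows directly from the previous corollary: if either $\Wu$ or $\Ws$ were quasi-isometric, then since it is expanding under $g$ (respectively $g^{-1}$) whose induced map on $\pi_1(M)$ is the identity, we would contradict the statement that the induced homomorphism of an expanding quasi-isometric foliation's dynamics cannot be the identity.

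I do not expect any real obstacle here, since Lemma \ref{bdd} and the previous corollary do all the work; the only thing one needs to verify carefully is that $g$ being $C^0$-close to $\phi_1$ is enough to guarantee $g_* = (\phi_1)_*$, which is a standard fact in algebraic topology (nearby maps into a manifold are homotopic).
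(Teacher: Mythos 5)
Your proposal is correct and follows exactly the argument the paper intends: the flow gives a homotopy from the time-one map to the identity (and $C^0$-nearby maps remain homotopic), so $g_*$ and $(g^{-1})_*$ are the identity on $\pi_1(M)$, and the previous corollary rules out a quasi-isometric foliation expanding under $g$ (the strong unstable) or under $g^{-1}$ (the strong stable). The paper leaves this corollary without an explicit proof precisely because, as you note, Lemma \ref{bdd} and the preceding corollary do all the work.
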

Theorem \ref{main} follows from Theorem \ref{fundy}.  However, since cases (1) and
(2) of Theorem \ref{main} have short, direct proofs, we give them first for
illustrative purposes.

\begin{prop}
    Let $M$ be a compact manifold of constant negative curvature,
    $\dim M  \ge  3$, and $f:M \to M$ a homeomorphism with lift
    $\tilf:\tilM \to \tilM$.
    Then, there is $C>0$ such that
    $d(\tilf^n(x), \tilf^n(y)) < d(x,y) + C n$ for $x,y \in \tilM$.
\end{prop}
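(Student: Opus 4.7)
The plan is to exploit Mostow rigidity to replace $\tilf$ by a genuine isometry of hyperbolic space, which is the essential input that makes the dimension hypothesis $\dim M \geq 3$ necessary. Since $M$ has constant negative curvature, we may rescale so that the universal cover is $\tilM = \mathbb{H}^d$, and $\pi_1(M)$ acts on $\tilM$ as a uniform lattice of isometries.

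First I would invoke Mostow rigidity: for closed hyperbolic manifolds of dimension $\geq 3$, every automorphism of $\pi_1(M)$ is induced by an isometry of $M$. Applying this to the automorphism $f_*:\pi_1(M)\to\pi_1(M)$ produced by $\tilf$, one obtains an isometry $g:M\to M$ inducing the same outer automorphism of $\pi_1(M)$. Lifting $g$ carefully (composing with a deck transformation if necessary) yields an isometry $\phi:\tilM\to\tilM$ such that the induced homomorphism $\phi_*$ equals $f_*$ exactly, not just up to conjugation. Getting this normalization right is the step where one must be a bit careful, but it is purely algebraic and straightforward.

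Once $\phi$ is in hand, Lemma \ref{bdd} applies to the pair $(\tilf,\phi)$ and produces a constant $C_0>0$ such that $d(\tilf(x),\phi(x))<C_0$ for all $x\in\tilM$. A short induction using the fact that $\phi$ is an isometry then gives $d(\tilf^n(x),\phi^n(x))\leq nC_0$ for every $n$: indeed, the triangle inequality yields
\[
d(\tilf^{n+1}(x),\phi^{n+1}(x))\leq d(\tilf(\tilf^n(x)),\phi(\tilf^n(x)))+d(\phi(\tilf^n(x)),\phi(\phi^n(x)))\leq C_0+nC_0.
\]
Applying this bound at both $x$ and $y$ and using $d(\phi^n(x),\phi^n(y))=d(x,y)$ gives $d(\tilf^n(x),\tilf^n(y))\leq d(x,y)+2nC_0$, so $C=2C_0$ works.

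The only real obstacle is the appeal to Mostow rigidity; everything else is the triangle inequality. In particular, the dimension hypothesis $d\geq 3$ enters exclusively through rigidity, which is consistent with the fact that the statement genuinely fails for hyperbolic surfaces, where automorphisms of $\pi_1$ can be realized only by quasi-isometries rather than isometries of $\mathbb{H}^2$.
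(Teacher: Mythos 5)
Your proposal is correct and follows essentially the same route as the paper: Mostow rigidity produces an isometry $g$ with $g_* = f_*$, Lemma \ref{bdd} bounds $d(\tilf(x),\tilg(x))$ uniformly, and the triangle inequality plus induction gives the linear bound (the paper iterates the one-step inequality $d(\tilf(x),\tilf(y)) \le d(x,y) + 2C$, while you equivalently track the drift $d(\tilf^n(x),\phi^n(x)) \le nC_0$ from the isometry orbit). Your explicit care in normalizing the lift so that $\phi_* = f_*$ exactly, rather than merely up to inner automorphism, is a point the paper passes over silently, and your closing remark about failure for surfaces correctly identifies where the dimension hypothesis enters.
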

\begin{proof}
    By Mostow rigidity, there is an isometry $g:M \to M$ and lift $\tilde
    g:\tilde M \to \tilde M$ such that $f_*=g_*$ as
    automorphisms of $\pi_1(M)$.  By Lemma \ref{bdd}, for $x,y \in \tilM$,
    \begin{align*}
        d(\tilf(x), \tilf(y))
        & \le d(\tilf(x), \tilg(x))+d(\tilg(x),\tilg(y))+d(\tilg(y),\tilf(y)) \\
        & \le C + d(x,y) + C
      \end{align*}
    and the claim follows by induction.
\end{proof}
\begin{prop}
    Let $M$ be a compact manifold of constant negative curvature,
    $\dim M  \ge  3$, and let $T_1M$ be the unit tangent bundle.
    If $f:T_1M \to T_1M$ is a homeomorphism with lift $\tilf:\tilTM \to \tilTM$,
    there is $C>0$ such that $d(\tilf^n(x), \tilf^n(y)) < d(x,y) + C n$ for
    $x,y \in \tilTM$.
\end{prop}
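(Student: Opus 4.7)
My plan is to mirror the proof of the previous proposition: build an isometry $g: T_1M \to T_1M$ inducing the same homomorphism on $\pi_1(T_1M)$ as $f$, apply Lemma \ref{bdd} to control $d(\tilf(x), \tilg(x))$, and then run the same triangle inequality and induction.

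The key new ingredient is producing $g$. First I would use the fact that since $\dim M = d \geq 3$, the fiber $S^{d-1}$ of the bundle $p: T_1M \to M$ is simply connected, so the long exact homotopy sequence of the fibration gives an isomorphism $p_*: \pi_1(T_1M) \to \pi_1(M)$. Under this identification, the automorphism $f_*$ corresponds to some automorphism $\bar f$ of $\pi_1(M)$.

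By Mostow rigidity, there is an isometry $h: M \to M$ with $h_* = \bar f$. I would then take $g := Dh: T_1M \to T_1M$, which is an isometry of $T_1M$ in the Sasaki metric---the canonical Riemannian metric on $T_1M$ built from the Levi-Civita connection of $M$, for which derivatives of isometries of $M$ are always isometries. Because $p \circ g = h \circ p$, choosing lifts compatibly yields $g_* = f_*$ on $\pi_1(T_1M)$, so Lemma \ref{bdd} gives $C > 0$ with $d(\tilf(x), \tilg(x)) < C$ for all $x \in \tilTM$. Since $\tilg$ is an isometry of the lifted Sasaki metric, the triangle inequality yields $d(\tilf(x), \tilf(y)) \leq d(x,y) + 2C$, and a straightforward induction on $n$ completes the argument.

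The main obstacle I anticipate is the bookkeeping around the isomorphism $\pi_1(T_1M) \cong \pi_1(M)$ and verifying that $g_* = f_*$ after adjusting lifts by a deck transformation if necessary; the rest of the argument is entirely parallel to the preceding proof. A minor additional point is to justify that the derivative of an isometry of $M$ acts as an isometry of the Sasaki metric on $T_1M$, which is standard. (One could sidestep the Sasaki metric altogether by noting that the isometry group of $M$ is finite by Mostow rigidity and averaging any metric on $T_1M$ over the induced action.)
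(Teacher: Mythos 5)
Your proof is correct, and while it shares the paper's two main inputs---the long exact homotopy sequence of the fibration $p:T_1M \to M$ showing $p_*$ is an isomorphism (the fiber $\bbS^{d-1}$ is simply connected since $d \ge 3$), and Mostow rigidity producing an isometry $h$ of $M$ realizing $p_* f_* p_*\inv$---the concluding mechanism is genuinely different. The paper never constructs a map of $T_1M$: it compares $\tilp \circ \tilf$ with $\tilde h \circ \tilp$ via Lemma \ref{bdd} \emph{downstairs}, obtains $d(\tilp(\tilf^n(x)), \tilp(\tilf^n(y))) < d(\tilp(x),\tilp(y)) + Cn$ in $\tilM$, and then transfers the estimate back up using the fact that $\tilp$ is a quasi-isometry between the universal covers (the fibers being compact). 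You instead promote $h$ to the isometry $Dh$ of $T_1M$ with the Sasaki metric and rerun the previous proposition's triangle-inequality argument verbatim \emph{upstairs}. Your route is arguably cleaner: it bypasses the paper's unproved ``one can show there is a global constant $R$'' step and invokes Lemma \ref{bdd} only once, on $T_1M$ itself; the lift bookkeeping you flag is handled exactly as in the paper's first proposition, since replacing a lift $\tilg$ by $\beta \circ \tilg$ conjugates $g_*$ by $\beta$, so after adjustment one gets $g_* = f_*$ on the nose. The cost is the (standard, but additional) fact that derivatives of isometries preserve the Sasaki metric---or your averaging alternative, which is valid because $\mathrm{Isom}(M)$ is finite by Mostow rigidity. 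One small caveat applying to either variant: the constant-free form $d(x,y) + Cn$ is metric-dependent, and for a metric on $T_1M$ other than the one your $g$ preserves, a bi-Lipschitz comparison yields only $L\, d(x,y) + C'n + L$; but such linear growth is all that the application to Theorem \ref{main} requires, and the paper's own proof has the same feature.
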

\begin{proof}
    The unit tangent bundle $T_1M$ fibers over $M$ with fiber $\bbS^k$, $k>2$.
    The long exact sequence of homotopy groups for a fibration
    \[
        \ldots \to \pi_1(\bbS^k) \to \pi_1(T_1M) \to \pi_1(M) \to \pi_0(\bbS^k) \to \ldots
    \]
    shows that the projection $p:T_1M \to M$ induces an isomorphism $p_*$ on the
    fundamental groups.  By Mostow rigidity, there is an isometry $g:M
    \to M$ such that $p_*f_*p_*\inv=g_*$.  After lifting,
    $d_{\tilM}(\tilp(\tilf(x)), \tilg(\tilp(x)))$ is bounded for $x
    \in \tilTM$.  Arguing as in the last proof, for any $x$ and $y$
    \[
        d(\tilp(\tilf(x)), \tilp(\tilf(y)))
        < d(\tilp(x), \tilp(y)) + C
    \]
    so that
    \[
        d(\tilp(\tilf^n(x)), \tilp(\tilf^n(y)))
        < d(\tilp(x), \tilp(y)) + n C,
    \]
    and as $p_*$ is an isomorphism, one can show that there is a global
    constant $R>1$ such that
    $
        d_{\tilM}(\tilp(x),\tilp(y)) <
        R d_{\tilTM}(x,y) + R.
    $
    From these inequalities the proof follows.
\end{proof}
\section{The general proof}
To prove Theorem \ref{fundy} and case (3) of Theorem \ref{main}, we reason more
abstractly.
Suppose $M$ is a compact manifold with universal covering $\tilM$,
and $f:M \to M$ is a diffeomorphism with lift $\tilf:\tilM \to \tilM$,
which induces an automorphism 
$f_*: \pi_1(M) \to \pi_1(M)$.

Fix a fundamental domain $K \subset \tilM$ and for a subset $A \subset \pi_1(M)$
define $AK = \{\alpha x : \alpha \in A, x \in K\}$.  Observe that $\tilf(AK) =
f_*(A)\tilf(K)$ and if $A'$ is another subset of $\pi_1(M)$, then
$A A' K = (A A') K = A(A' K)$ is well-defined.

Fix a finite set of generators for $\pi_1(M)$ and define a metric on the group
by word distance.
There is a constant $C>0$ such that $d_{\tilM}(\alpha_i x, x) < C$ for every
generator $\alpha_i$ of $\pi_1(M)$ and all $x \in \tilM$.  Consequently, for a
subset $A \subset \pi_1(M)$,
\[
    \diam(AK)  \le  C \diam(A) + \diam(K)
\]
where the diameters of $AK$ and $K$ are measured on $\tilM$ and
$\diam(A)$ is with respect to the word metric.

As $\tilf(K)$ is compact, there is an integer $N$ such
that $\tilf(K) \subset B_N K$
where $B_N = \{ \alpha \in \pi_1(M) : |\alpha|  \le  N \}$.
The word metric is defined such that the $N$-neighbourhood $U_N(A)$ of a set
$A \subset \pi_1(M)$ is given by $A B_N$, and therefore
\[
    \tilf(AK) = f_*(A) \tilf(K) \subset f_*(A) B_N K = U_N(f_*(A)) K.
\]
Starting with a subset $A_0 \subset \pi_1(M)$, define a sequence $\{A_k\}$ by
$A_{k+1} = U_N(f_*(A_k))$.
One can prove by induction that $\tilf^k(A_0K) \subset A_k K$ for all $k \ge 1$.
If the diameter of $A_k$ grows at most polynomially, then the diameter of
$\tilf^n(A_0K)$ does as well.
The above reasoning is summed up in the following proposition.

\begin{prop} \label{growpoly}
    Suppose $G$ is a finitely generated group with the following property:

    \begin{quote}
        For every automorphism $\phi:G \to G$, integer $N>0$ and starting set
        $A_0 \subset G$, the sequence $\{A_k\}$ defined by $A_{k+1} =
        U_N(\phi(A_k))$ grows at most polynomially in diameter.
    \end{quote}
    Then, for any manifold $M$ with $\pi_1(M) = G$, diffeomorphism $f:M \to M$
    with lift $\tilf: \tilM \to \tilM$ and bounded subset $K \subset \tilM$,
    the diameter of $\tilf^n(K)$ grows at most polynomially as $n \to \infty$.
      \end{prop}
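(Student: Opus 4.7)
The plan is to reduce the statement directly to the hypothesis on $G$ by reusing the bookkeeping already developed in the paragraphs preceding the proposition. The substantive content of the proof is essentially there; the only additional step is to replace a fundamental domain by an arbitrary bounded set $K \subset \tilM$.

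First I would fix a fundamental domain $D \subset \tilM$ (playing the role of the set the earlier paragraphs called $K$), together with a finite generating set for $\pi_1(M)$, the constant $C > 0$ satisfying $\diam(AD) \le C\diam(A) + \diam(D)$, and an integer $N$ with $\tilf(D) \subset B_N D$. Since $f$ is a diffeomorphism, $f_* : \pi_1(M) \to \pi_1(M)$ is an automorphism, so the hypothesis on $G$ applies with $\phi = f_*$.

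Next, given the bounded subset $K \subset \tilM$ from the proposition, set
\[
    A_0 = \{\alpha \in \pi_1(M) : \alpha D \cap K \ne \emptyset\}.
\]
Because $K$ is bounded and the deck action is properly discontinuous, $A_0$ is finite; in particular it has finite word-metric diameter, and $K \subset A_0 D$ by construction. The induction already established in the preceding discussion then gives
\[
    \tilf^n(K) \subset \tilf^n(A_0 D) \subset A_n D,
\]
where $A_{k+1} = U_N(f_*(A_k))$. Applying the hypothesis on $G$ to $\phi = f_*$, the integer $N$, and the starting set $A_0$, the diameter of $A_n$ grows at most polynomially in $n$. Combined with $\diam(\tilf^n(K)) \le \diam(A_n D) \le C\diam(A_n) + \diam(D)$, this yields the conclusion.

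There is really no serious obstacle here: the substance of the argument is contained in the preceding paragraphs, and the only new observation required is the elementary fact that a bounded subset of $\tilM$ meets only finitely many deck-translates of a fundamental domain. The real work of the paper lies in establishing the hypothesis on $G$ itself, which is what the subsequent sections must address.
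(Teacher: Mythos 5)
Your proposal is correct and follows essentially the same route as the paper, whose ``proof'' is precisely the bookkeeping in the paragraphs preceding the proposition: the inclusion $\tilde f(AK) \subset U_N(f_*(A))K$, induction, and the bound $\diam(AK) \le C\diam(A) + \diam(K)$. Your one addition --- covering an arbitrary bounded $K$ by finitely many deck-translates of a compact fundamental domain, which is valid by proper discontinuity of the deck action on the universal cover of a closed manifold --- is exactly the elementary step the paper leaves implicit.
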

\begin{notation}
    For lack of a better word, call any group $G$ satisfying the hypothesis of
    Proposition \ref{growpoly} \emph{unstrechable}.
\end{notation}
\begin{cor}
    There is no expanding quasi-isometric foliation on a manifold with
    unstretchable fundamental group.
\end{cor}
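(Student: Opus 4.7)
The plan is to chain Proposition \ref{growpoly} directly with the earlier (unlabeled) lemma stating that if $W$ is expanding and quasi-isometric under $f$, then for distinct points $x, y$ on the same leaf of the lifted foliation $\tilW$ the sequence $\{d_{\tilM}(\tilf^n(x), \tilf^n(y))\}$ grows exponentially in $n$. Since the fundamental group of $M$ is unstretchable, Proposition \ref{growpoly} asserts that for every bounded subset $K \subset \tilM$ the diameter of $\tilf^n(K)$ grows at most polynomially in $n$. These two growth rates are incompatible, which will give the contradiction.

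Concretely, suppose toward contradiction that such a foliation $W$ exists. Pick any leaf of $\tilW$; since leaves are positive-dimensional submanifolds, one can choose two distinct points $x, y$ on it. Set $K = \{x, y\} \subset \tilM$, a bounded subset. Then
\[
    \diam \tilf^n(K) = d_{\tilM}(\tilf^n(x), \tilf^n(y)),
\]
which on the one hand must grow exponentially by the expanding quasi-isometric lemma, and on the other hand must grow at most polynomially by Proposition \ref{growpoly} applied to $K$. This is a contradiction, so no expanding quasi-isometric foliation exists on $M$.

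There is no real obstacle here; the heavy lifting is packaged in the two prior results. The only points to verify are that a two-point set is indeed a legitimate bounded subset of $\tilM$ to which Proposition \ref{growpoly} applies, and that exponential growth in $n$ cannot be bounded above by any polynomial, both of which are immediate.
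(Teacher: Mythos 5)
Your proof is correct and matches the paper's intended argument: the corollary is stated without proof precisely because it is the direct combination of Proposition \ref{growpoly} with the earlier lemma, exactly as you chain them (the paper even announces this strategy in the preliminaries, right after the lemma). Taking $K = \{x,y\}$ as the bounded set is the natural instantiation, and the incompatibility of exponential and polynomial growth is immediate.
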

We consider the fundamental groups of hyperbolic manifolds at the end of this
section.
For now, consider the fundamental group arising from a manifold included in case
(3) of Theorem \ref{main}.

\begin{prop} \label{toralstretch}
    The fundamental group of a suspension of a hyperbolic toral automorphism
    is unstretchable.
\end{prop}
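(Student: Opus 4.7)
The plan is to show that for any automorphism $\phi$ of $G = \Z^d \rtimes_T \Z$, where $T$ is the hyperbolic toral automorphism and $t$ denotes a generator of the $\Z$-factor, the word length $|\phi^k(g)|_G$ is bounded by a polynomial in $k$ and $|g|_G$. From this, polynomial growth of $\diam(A_k)$ follows by unrolling the recursion $A_{k+1} = \phi(A_k) B_N$.

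The first step is a structural analysis of $\phi$. The subgroup $\Z^d$ is characteristic in $G$: it coincides with the centralizer of $[G,G] = (T-I)\Z^d$, since $T$ hyperbolic implies that no $T^m$ with $m \neq 0$ fixes a nonzero vector of $\Z^d$, so no element of $G \setminus \Z^d$ centralizes a finite-index subgroup of $\Z^d$. Hence $\phi$ restricts to some $\bar\phi \in \operatorname{GL}(d, \Z)$. Writing $\phi(t) = w_0 t^\epsilon$ with $w_0 \in \Z^d$ and $\epsilon \in \{\pm 1\}$, the relation $tvt^{-1} = Tv$ forces $\bar\phi T \bar\phi^{-1} = T^\epsilon$; but for the estimates below, the only needed input is that $\bar\phi$ is a fixed linear map, so $\|\bar\phi^k(v)\| \leq \|\bar\phi\|^k \|v\|$.

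The second step is the key geometric bound: for $v \in \Z^d$, $|v|_G = O\bigl((\log(1 + \|v\|))^c\bigr)$ for some constant $c$. Since conjugation by $t^m$ acts on $\Z^d$ as $T^m$, expanding $v$ in a digit system adapted to the hyperbolic splitting of $T$ allows long lattice vectors to compress into short group words. Applied to $\bar\phi^k(v)$, and combined with the reverse inequality $\log(1+\|v\|) \leq C|v|_G$, this yields $|\bar\phi^k(v)|_G \leq P_1(k)(1 + |v|_G)^c$. An analogous argument shows $\phi^k(t) = v_k \cdot t^{\epsilon_k}$ with $v_k \in \Z^d$ of Euclidean norm at most exponential in $k$, so $|\phi^k(t)|_G$ is polynomial in $k$. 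These combine to give $|\phi^k(g)|_G \leq P(k)(1 + |g|_G)^c$ uniformly. Unrolling
\[
A_k = \phi^k(A_0) \cdot \phi^{k-1}(B_N) \cdot \phi^{k-2}(B_N) \cdots \phi(B_N) \cdot B_N,
\]
the word length of any element of $A_k$ is bounded by a sum of $k + 1$ factors, each polynomial in $k$, yielding $\diam(A_k) \leq Q(k)$ for some polynomial $Q$.

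The main obstacle is the polylogarithmic word-length bound for lattice vectors. When the stable and unstable subspaces of $T$ are irrational with respect to $\Z^d$, one cannot directly apply a single conjugation $v = t^m(T^{-m}v)t^{-m}$ to make $T^{-m}v$ small, because the stable and unstable components of $v$ behave oppositely under $T^{-m}$. Working around this --- by a base-$T$ digit expansion, or by separately controlling projections onto invariant subspaces together with lattice approximation --- is the technical heart of the argument.
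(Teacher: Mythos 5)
Your skeleton matches the paper's proof quite closely: you identify $\Z^d$ as a characteristic subgroup (the paper does this via $H = \rad([G,G])$, you via the centralizer of $[G,G] = (T-I)\Z^d$; both hinge on hyperbolicity forcing $(T-I)\Z^d$ to have full rank, and your variant is equally valid), you derive the same normal form $\phi|_{\Z^d} = \bar\phi \in GL(d,\Z)$, $\phi(t) = w_0 t^{\pm 1}$ as the paper's Lemma \ref{lemma-autG}, and your unrolled product $A_k = \phi^k(A_0)\,\phi^{k-1}(B_N)\cdots\phi(B_N)\,B_N$ is a clean equivalent of the paper's inductive containment of $A_k$ in the boxes $B(\ell,h) = \{x\cdot z^k : \|x\|\le\lambda^\ell,\ |k|\le h\}$. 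You are also correct that a polylogarithmic bound $|v|_G = O\bigl((\log(1+\|v\|))^c\bigr)$ for any fixed $c$ would suffice for polynomial diameter growth, and that the reverse inequality $\log(1+\|v\|)\le C|v|_G$ holds, since after $n$ generator multiplications the $H$-component changes by at most $n\|A\|^n$.

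However, the step you flag as ``the technical heart'' is precisely the one substantive lemma of the paper (Lemma \ref{polylh}), and you leave it unproven, so as submitted the argument has a genuine gap. Your worry is well founded: a one-sided base-$T$ digit expansion fails for hyperbolic (as opposed to expanding) $T$, and the stable/unstable subspaces are irrational, so there are no invariant sublattices to project onto. The paper sidesteps lattice arithmetic entirely by going geometric: realize $G$ as $\pi_1$ of the suspension manifold $M = \T^d\times\R/\!\sim$, equip $M$ with a metric making the suspension flow Anosov, and observe that along a strong stable leaf in $\tilM$ one has $d_s(\tilphi_t(x,0),\tilphi_t(0,0)) \le e^{-\sigma t}d_s((x,0),(0,0))$; flowing for time $t\approx\sigma^{-1}\log\|x\|$ shrinks the stable displacement to unit size, giving $d_{\tilM}((x,0),(0,0)) \le C\log\|x\|$ for stable $x$, and symmetrically for unstable $x$. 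Since the two linear foliations are transverse, concatenation yields the bound for all $x\in\R^d$ with $\|x\|>1$. Then the orbit embedding $G\to\tilM$, $x\cdot z^k\mapsto(x,k)$, is a quasi-isometry (Milnor--\v{S}varc), so $|v|_G \le CQ\log\|v\|$ --- your bound with $c=1$. If you insist on a purely combinatorial proof, a two-sided expansion $v = \sum_{|j|\le O(\log\|v\|)} T^j d_j$ with digits from a fixed finite subset of $\Z^d$ can be pushed through, but it requires genuine care with lattice rounding precisely where you predicted; the geometric detour through the Anosov metric is shorter and is what the paper actually does.
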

To prove this proposition, consider $\pi_1(M)$ as an abstract group $G$.
It fits into a exact sequence
\[
    0 \to \Z^d \to G \to \Z \to 0.
\]
Let $H \triangleleft G$ be the image of $\Z^d$ in this sequence and fix an
element $z \in G$ such that its image under the projection $G \to \Z$ generates
$\Z$.  Every element of $G$ may then be written uniquely as $x \cdot z^k$ where
$x \in H$ and $k \in \Z$.  Further, there is an automorphism $A:H \to H$, coming
from the hyperbolic toral automorphism, such that $z \cdot x = (Ax) \cdot z$ for
all $x \in H$.

\begin{lemma} \label{lemma-autG}
    The automorphisms of $G$ are exactly those of the form $\phi(x) = Bx$ for
    $x \in H$ and $\phi(z)=v \cdot z^e$ where $B \in Aut(H) \approx GL(d,Z)$, $v \in
    H$, $e = \pm 1$, and $A^e B = B A$.
\end{lemma}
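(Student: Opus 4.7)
The plan is to proceed in three stages: (i) show that $H$ is a characteristic subgroup of $G$, so that any automorphism $\phi$ preserves it; (ii) read off the form of $\phi$ on $H$ and on the quotient $G/H \cong \Z$; (iii) extract the compatibility $A^e B = BA$ by pushing the defining relation $z x z^{-1} = Ax$ through $\phi$.

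The heart of the argument, and the only place the hyperbolicity of $A$ enters, is step (i). Writing $H$ additively, the only non-trivial commutators among generators arise from $[z,x] = z x z^{-1} x^{-1} = (A-I)x$ for $x \in H$; commutators inside $H$ vanish, and since $(A-I)H$ is visibly $z$-invariant, $[G,G] = (A-I)H$. Because $A$ is hyperbolic, $1$ is not an eigenvalue, so $A - I$ is nonsingular over $\Q$ and $(A-I)H$ has finite index in $H$. The abelianization is therefore $G^{\mathrm{ab}} \cong H/(A-I)H \,\oplus\, \Z$, whose torsion subgroup is precisely the finite summand $H/(A-I)H$ (the $\Z$ factor being torsion-free). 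Hence $H$ is the preimage in $G$ of the torsion subgroup of $G^{\mathrm{ab}}$ under the natural surjection $G \to G^{\mathrm{ab}}$, and as such is characteristic.

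Given $\phi(H) = H$, the restriction $B = \phi|_H$ is an automorphism of $H \cong \Z^d$, i.e., $B \in GL(d,\Z)$. The induced automorphism of $G/H \cong \Z$ must be multiplication by $e = \pm 1$, so $\phi(z) = v \cdot z^e$ for some $v \in H$. Applying $\phi$ to $z x z^{-1} = Ax$, the left side becomes $(v z^e)(Bx)(v z^e)^{-1}$; using $z^e y z^{-e} = A^e y$ for $y \in H$ together with the commutativity of $H$, this collapses to $A^e B x$, while the right side is $B(Ax)$. Equating for all $x \in H$ yields $A^e B = BA$. For the converse, any triple $(B, v, e)$ satisfying $A^e B = BA$ respects the single non-trivial relation of the presentation and so defines a homomorphism; bijectivity is immediate from $B \in GL(d,\Z)$ and $e = \pm 1$. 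The entire difficulty is concentrated in step (i); once $H$ is known to be characteristic, the remaining bookkeeping is formal.
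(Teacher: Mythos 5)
Your proof is correct and follows essentially the same route as the paper: establish that $H$ is characteristic using the hyperbolicity of $A$ (so that $A-I$ is nonsingular), then read off $B$, $e$, $v$ from the restriction to $H$ and the induced map on $G/H\cong\Z$, and derive $A^eB=BA$ from the relation $zxz^{-1}=Ax$. Your characterization of $H$ as the preimage of the torsion subgroup of $G^{\mathrm{ab}}$ is only a cosmetic rephrasing of the paper's characterization $H=\rad([G,G])$ (the set of elements with a power in $[G,G]$), the two invariants being identical.
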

This result is well known, at least in the case $d = 2$.
For completeness, we give a short proof for general $d$,
starting with the following claim.

\begin{lemma} \label{lemma-charH}
    $H$ is a characteristic subgroup:
    if $\phi$ is an automorphism of $G$, then $\phi(H) = H$.
\end{lemma}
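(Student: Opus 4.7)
The plan is to give an intrinsic group-theoretic description of $H$ that makes $\phi(H) = H$ automatic for every automorphism $\phi$. Specifically, I will show that $H$ is precisely the preimage in $G$ of the torsion subgroup of the abelianization $G^{\mathrm{ab}} = G/[G,G]$. Since both $[G,G]$ and the torsion subgroup of an abelian group are characteristic, this finishes the proof.

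First I would check that every element of $H$ has finite order modulo $[G,G]$. Writing $H \cong \Z^d$ additively, the commutator $z h z^{-1} h^{-1}$ equals $Ah - h = (A-I)h$, so $[G,G]$ contains the sublattice $(A-I)(H) \subset H$. Because $A$ is hyperbolic, none of its eigenvalues lies on the unit circle, so in particular $1$ is not an eigenvalue and $\det(A-I) \neq 0$. Therefore $(A-I)(H)$ has finite index in $H$, so $H/[G,G]$ is a finite abelian group and every element of $H$ represents a torsion class in $G^{\mathrm{ab}}$.

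Conversely, any $g \in G \setminus H$ projects to a nonzero element of $G/H \cong \Z$, and so do all its nontrivial powers. Since the quotient $G \to \Z$ factors through $G^{\mathrm{ab}}$, no nontrivial power of $g$ can lie in $[G,G]$. Combining the two directions,
\[
    H = \{\, g \in G : g^n \in [G,G] \text{ for some } n \ge 1 \,\},
\]
which is patently preserved by any automorphism of $G$. The only nontrivial ingredient is the invertibility of $A-I$ coming from hyperbolicity; everything else is formal, and the same argument works for any semidirect product $\Z^d \rtimes_A \Z$ in which $1$ is not an eigenvalue of $A$.
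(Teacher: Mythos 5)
Your proof is correct and takes essentially the same route as the paper: your characterization of $H$ as the preimage of the torsion subgroup of $G/[G,G]$ is exactly the paper's $H = \rad([G,G])$, and both arguments rest on the same three ingredients --- the commutator computation showing $(A-I)H \subset [G,G]$, the invertibility of $A-I$ over $\R$ (from hyperbolicity of $A$) forcing $(A-I)H$ to have full rank in $H$, and the fact that commutators die under the projection $G \to \Z$ for the reverse inclusion.
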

\begin{proof}
    We will show that $H = \rad([G,G])$, that is, $v \in H$ if and only
    if there is $k \in \Z$ such that $v^k$ is in $[G,G]$.  As this is a purely
    group-theoretic characterization, it is preserved under isomorphism.
    Note that the image of a commutator $u v u \inv v \inv$ under a map
    $G \to \Z$ must be zero.  By the above short exact sequence,
    $[G,G] < H$ and $\rad([G,G]) < H$ as well.

    To show the other inclusion, note that for $x \in H$,
    \[    
        [x,z] = x \cdot z \cdot (-x) \cdot z \inv
        = (x - Ax) \in H
    \]
    and therefore $(A-I)H \subset [G,G]$ where $I:H \to H$ denotes the identity.
    Taking $A-I$ to be an $n \times n$ matrix, if $(A-I)\Z^d$ did not have full
    rank, it would mean $A-I$ has a nullspace (in both $\Z^d$ and $\R^d$), but
    $A$ is hyperbolic, implying that $A-I$ is invertible over $\R^d$.
    Therefore, $(A-I)\Z^d$ has full rank, and $\rad((A-I)\Z^d) = \Z^d$.
    Consequently, $H = \rad((A-I)H) < \rad([G,G])$.
\end{proof}
\begin{proof}
    [Proof of Lemma \ref{lemma-autG}]
    Let $\phi:G \to G$ be an automorphism.  From Lemma
    \ref{lemma-charH},
    $\phi(H)=H$, so define $B := \phi|_H \in Aut(H)$.  Further, $\phi$
    induces an automorphism on the quotient $G/H \approx \Z$ which must be of
    the form $\pm \id:\Z \to \Z$.  Therefore, the coset $z H$ maps to the coset
    $z^{\pm 1}H$ which is the case exactly when $\phi(z) = v \cdot z^{\pm 1}$ for
    some $v \in H$.
    To be well-defined, $\phi$ must satisfy $\phi(z) \cdot \phi(x) = \phi(Ax) \cdot
    \phi(z)$ for all $x \in H$.  This is equivalent to the condition
    $A^e B = B A$.  The converse direction is straightforward to verify.
\end{proof}
Now fix $\phi \in Aut(G)$, and define $b$, $v$, and $e$ as in Lemma
\ref{lemma-autG}.
For simplicity, assume $e = 1$. The case with $e=-1$ is similar.
Define a metric $\|\cdot\|$ on $H \approx \Z^d \subset \R^d$ using the standard
metric on $\R^d$.  Fix a very large positive constant $\lambda$, and define for
$\ell, h \in \N$ the set
\[
    B(\ell, h) = \{ x \cdot z^k \in G : \|x\|  \le  \lambda^\ell, |k|  \le  h \}.
\]
If $x_1, \ldots, x_m$ is a list of all elements of $H$ with norm one, then $\{
x_1, \ldots, x_m, z, z \inv \}$ is a generating set for $G$.
This determines a word metric on $G$.

\begin{lemma}
    For all $\ell, h  \ge  1$,
    \[
        U_1(B(\ell,h)) \subset B(\ell+h, h+1)
    \]
    and for $N  \ge  1$,
    \[
        U_N(B(\ell,h)) \subset B(\ell + N(h+N), h+N).
    \]  \end{lemma}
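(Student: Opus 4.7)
The plan is to prove the $U_1$ statement by a direct case analysis on the generators of $G$, and then to deduce the $U_N$ bound by iterating $N$ times, using that $U_N(A) = U_1(U_{N-1}(A))$ and that both $U_1$ and the box $B(\cdot,\cdot)$ are monotone in their arguments.

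First, I would fix $\lambda$ large enough once and for all, specifically $\lambda \ge \max(2,\|A\|,\|A\inv\|)$, and take an arbitrary element $(x z^k) \cdot w$ of $U_1(B(\ell,h))$, with $\|x\| \le \lambda^\ell$, $|k| \le h$, and $w \in \{e,\, x_i,\, z,\, z\inv\}$. The cases $w = e$ and $w = z^{\pm 1}$ are immediate: the first is trivial, and the second leaves the $H$-component untouched while changing $k$ to $k \pm 1$, which still satisfies $|k \pm 1| \le h+1$. The interesting case is $w = x_i$, where the commutation relation $z^k y = (A^k y) z^k$ (a consequence of $zx = (Ax)z$, which holds in the $e=1$ case) rewrites
\[
(x z^k) \cdot x_i = (x + A^k x_i)\, z^k.
\]
Then $\|x + A^k x_i\| \le \lambda^\ell + \|A^k\| \le \lambda^\ell + \lambda^h \le \lambda^{\ell+h}$: the middle inequality uses $\|A^k\| \le \max(\|A\|,\|A\inv\|)^{|k|}$, and the last uses $\lambda \ge 2$ together with $\ell,h \ge 1$.

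For the second containment, I would induct on $N$. The identity $B_N = B_{N-1} B_1$ gives $U_N = U_1 \circ U_{N-1}$, so iterating the first bound produces a sequence $(\ell_j, h_j)$ with $h_j = h + j$ and $\ell_j = \ell_{j-1} + h_{j-1}$. Solving the recursion gives
\[
\ell_N = \ell + Nh + N(N-1)/2 \le \ell + N(h + N)
\quad\text{and}\quad
h_N = h + N,
\]
so $U_N(B(\ell,h)) \subset B(\ell_N, h_N) \subset B(\ell + N(h+N), h+N)$, exactly the stated target box.

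The main obstacle is nothing deeper than careful bookkeeping: $\lambda$ has to be fixed before the sets $B(\ell,h)$ are defined, large enough to absorb both $\|A\|$ and $\|A\inv\|$ and to make the polynomial inequality $\lambda^\ell + \lambda^h \le \lambda^{\ell+h}$ hold uniformly for $\ell,h \ge 1$. In the omitted $e = -1$ case one replaces $A^k$ by $A^{-k}$ throughout; the same estimates apply verbatim since $\lambda$ already dominates $\|A\inv\|$.
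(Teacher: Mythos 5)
Your proposal is correct and takes essentially the same route as the paper: the identical commutation identity $(x \cdot z^k)\cdot y = (x + A^k y)\cdot z^k$ with the same norm estimates $\|x + A^k y\| \le \lambda^{\ell} + \lambda^{h} \le \lambda^{\ell+h}$ for the $U_1$ statement, and the same induction via $U_{n+1}(A) = U_1(U_n(A))$ for general $N$ (you merely solve the resulting recursion $\ell_j = \ell_{j-1} + h_{j-1}$, $h_j = h + j$ explicitly, which the paper leaves implicit). One small quibble: the relation $z\cdot x = (Ax)\cdot z$ is part of the group structure of $G$ and holds regardless of the sign $e$, which parametrizes the automorphism $\phi$ and does not appear in this lemma at all, so your closing caveat about the $e=-1$ case is unnecessary, though harmless.
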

\begin{proof}
    Suppose $x \cdot z^k \in B(\ell,h)$.  Then if $y \in H$ is a generator,
    $\|y\|=1$ and
    $
        (x \cdot z^k) \cdot y = (x + A^k y) \cdot z^k.
    $
    As $A$ is fixed, we may assume $\lambda$ was chosen large enough that
    $\|A^{\pm 1} u\| < \lambda u$ for all $u \in H$.  Then (assuming also
    $\lambda>2$),
    \[
        \|x + A^k y\| < \|x\| + \lambda^k \|y\|  \le  \lambda^\ell + \lambda^h
         \le  \lambda^{\ell+h}
    \]
    proving $(x \cdot z^k) \cdot y \in B(\ell+h, h+1)$.  The case
    $(x \cdot z^k) \cdot z^{\pm 1} = x \cdot z^{k \pm 1}$ is immediate.
    The second half of the lemma is proved by induction using
    $U_{n+1}(A) = U_1(U_n(A))$.
\end{proof}
\begin{lemma}
    For $\ell,h  \ge  2$, \quad $\phi(B(\ell,h)) \subset B(\ell+h, h+1)$.
\end{lemma}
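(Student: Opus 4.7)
The plan is to use the explicit form of $\phi$ from Lemma \ref{lemma-autG} (in the case $e = 1$): $\phi(x) = Bx$ for $x \in H$ and $\phi(z) = v \cdot z$, where $AB = BA$.  For any element $x \cdot z^k \in B(\ell, h)$ this gives
\[
\phi(x \cdot z^k) \;=\; (Bx) \cdot (v \cdot z)^k.
\]
Iterating the commutation rule $z \cdot y = (Ay) \cdot z$ for $k \ge 0$ (and $z\inv \cdot y = (A\inv y) \cdot z\inv$ for $k < 0$), one reduces $(v \cdot z)^k$ to the canonical form $u_k \cdot z^k$, where $u_k \in H$ is the partial sum $\sum_{j=0}^{k-1} A^j v$ for $k \ge 0$ and $-\sum_{j=1}^{|k|} A^{-j} v$ for $k < 0$.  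Thus $\phi(x \cdot z^k) = (Bx + u_k) \cdot z^k$; the $z$-exponent already satisfies $|k| \le h \le h+1$, so the only remaining task is to bound $\|Bx + u_k\|$ by $\lambda^{\ell+h}$.

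To do so, I enlarge the constant $\lambda$ (still ``very large'' as in the construction of the balls $B(\ell,h)$) so that in addition to $\|A^{\pm 1} u\| \le \lambda \|u\|$ it also satisfies $\|Bu\| \le \lambda \|u\|$ for all $u \in H$ and $\lambda \ge 2\|v\|$.  The first condition gives $\|Bx\| \le \lambda^{\ell+1}$, and summing the geometric series bound $\|A^{\pm j} v\| \le \lambda^{|j|}\|v\|$ yields $\|u_k\| \le 2\|v\| \lambda^{|k|} \le \lambda^{|k|+1} \le \lambda^{h+1}$.  Hence
\[
\|Bx + u_k\| \;\le\; \lambda^{\ell+1} + \lambda^{h+1} \;\le\; 2\,\lambda^{\max(\ell,h)+1}.
\]
Since $\ell, h \ge 2$ forces $\lambda^{\min(\ell,h)-1} \ge \lambda \ge 2$, this last quantity is at most $\lambda^{\max(\ell,h) + \min(\ell,h)} = \lambda^{\ell+h}$, placing $\phi(x \cdot z^k)$ in $B(\ell+h, h+1)$.

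There is no substantial obstacle beyond bookkeeping: once $\lambda$ is enlarged, after $\phi$ is fixed, to dominate $A^{\pm 1}$, $B$, and $\|v\|$ simultaneously, the argument is a single geometric-sum estimate.  The hypothesis $\ell, h \ge 2$ is precisely the slack needed to absorb both the $\lambda^{\ell+1}$ factor coming from $B$ and the $\lambda^{h+1}$ factor coming from $u_k$ into a single $\lambda^{\ell+h}$.  The case $e = -1$ is handled identically, with $A\inv$ replacing $A$ in the commutation relations so that the $u_k$ sums have the same geometric structure.
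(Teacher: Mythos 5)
Your proof is correct and follows essentially the same route as the paper's: reduce $\phi(x \cdot z^k)$ to the normal form $(Bx + u_k)\cdot z^k$ via the commutation relation, enlarge $\lambda$ (after $\phi$ is fixed) to dominate $A^{\pm 1}$, $B$, and $v$, and absorb the geometric-sum bound $\lambda^{\ell+1} + \lambda^{h+1}$ into $\lambda^{\ell+h}$ using $\ell, h \ge 2$. The only differences are cosmetic bookkeeping in how $\lambda$ is calibrated, plus your explicit treatment of the $k<0$ formula $u_k = -\sum_{j=1}^{|k|} A^{-j}v$, which the paper leaves as ``the same reasoning with $A\inv$ in place of $A$.''
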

\begin{proof}
    Recall $\phi$ is defined by $\phi(x)=Bx$ for $x \in H$ and $\phi(z)=v \cdot z$.
    Then for $k>0$,
    $\phi(z^k) = (v \cdot z)^k = (\sum_{i=0}^{k-1} A^i v) \cdot z^k$
    as can be proved by induction.
    As $v$ is fixed, we may assume $\lambda$ was chosen large enough that
    $\|v\| + \|A v\| < 1 + \lambda$ and $\|A^i v\| < \lambda^i$ for all $i>2$.
    These conditions imply
    $\|\sum_{i=0}^{k-1} A^i v\| < \sum_{i=0}^{k-1} \lambda^i < \lambda^k.$
    Also, assume $\|Bx\| < \lambda x$ for all $x \in H$.
    If $x \cdot z^k \in B(\ell,h)$ with $k  \ge  0$, then
    \[
        \phi(x \cdot z^k) = (Bx + \sum_{i=0}^{k-1} A^i v) \cdot z^k
    \]
    where
    \[
        \|Bx + \sum A^i v\|  \le  \lambda \|x\| + \lambda^k
         \le  \lambda^{\ell+1} + \lambda^h
         \le  \lambda^{\ell+h}
    \]
    so $\phi(x \cdot z^k) \in B(\ell+h, h)$.
    The case of $x \cdot z^k$ with $k$ negative follows by the same reasoning
    with $A \inv$ in place of $A$.
\end{proof}
\begin{remark}
    We assumed $h \ge 2$ above so that $\lambda^{\ell+1} + \lambda^h  \le 
    \lambda^{\ell+h}$ would hold.
\end{remark}
Now, as in the hypothesis of Proposition \ref{growpoly}, assume $N$ is fixed, and
$A_0$ is a finite subset of $G$ which defines a sequence $\{A_k\}$ by
$A_{k+1}=U_N(\phi(A_k))$.  As $A_0$ is finite, it is contained in some
$B(\ell,h)$ for large enough $\ell$ and $h$.
Then,
\begin{align*}
    A_1
    &\subset U_N(\phi(B(\ell,h))) \\
    &\subset U_N(B(\ell+h,h)) \\
    &\subset B(\ell+h+ N(h+N), h+N) \\
    &\subset B(\ell + 2(h+N)^2, h+N).
\end{align*}
By induction,
$A_k \subset B(\ell + p(k), h + N k)$
where $p(k) := \sum_{i=1}^{k} 2(h + N i)^2$ grows at most polynomially in $k$.
To show $\diam(A_k)$ is growing polynomially, it is enough to show that the
diameter of $B(\ell,h)$ is polynomial in $\ell$ and $h$.  In fact, the
dependence is linear.

\begin{lemma} \label{polylh}
    There is $C>0$ such that $\diam(B(\ell,h)) < C \ell+h$.
\end{lemma}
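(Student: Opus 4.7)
The plan is to reduce the diameter bound to a word-length bound on elements of $H\subset G$. Since a subset of a group equipped with a left-invariant word metric has diameter at most twice the maximal distance from the identity, and any $x\cdot z^k\in B(\ell,h)$ is represented by a word of length at most $|x|_G+|k|\le|x|_G+h$, it suffices to show $|x|_G\le C\ell$ for every $x\in H$ with $\|x\|\le\lambda^\ell$.

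The mechanism is the conjugation relation $z^r y z^{-r}=A^r y$, which realises $A^r y$ (of norm up to $\lambda^{|r|}$) by a word of length only $2|r|+1$ when $\|y\|\le 1$. So given a decomposition $x=\sum_{i=1}^{N}A^{r_i}y_i$ with $\|y_i\|\le 1$, $|r_i|\le C_2\ell$, and $N\le C_1\ell$, one sorts the $r_i$'s in increasing order (permissible because $H$ is abelian) and forms the word
\[
    z^{r_{(1)}}y_{(1)}z^{r_{(2)}-r_{(1)}}y_{(2)}\cdots z^{r_{(N)}-r_{(N-1)}}y_{(N)}z^{-r_{(N)}},
\]
whose length is at most $N+4\max_i|r_i|=O(\ell)$.

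The main obstacle is producing such a decomposition. The naive iteration $x\mapsto A^{-1}(x-f)$ with $f$ in a finite system of representatives of $\Z^d/A\Z^d$ fails because $A^{-1}$ expands the stable subspace $\Es$. Instead, I would exploit the hyperbolic splitting $\R^d=\Eu\oplus\Es$: starting from $x_0=x$, at step $k$ write $x_k=x_k^u+x_k^s$, and for each $\pm$ choose $|r_k^\pm|=O(\ell-k)$ so that $A^{-r_k^\pm}x_k^{u/s}$ has bounded norm, then select $y_k^\pm\in H$ of bounded norm whose projection onto $\Eu$ (resp.\ $\Es$) approximates $A^{-r_k^\pm}x_k^{u/s}$ to within $\lambda^{-1}$. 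Setting $x_{k+1}=x_k-A^{r_k^+}y_k^+-A^{r_k^-}y_k^-$ shrinks the norm by roughly $\lambda^{-1}$, so after $O(\ell)$ iterations the remainder is bounded. The existence of suitable $y_k^\pm$ in a bounded lattice neighborhood is the key technical point; it relies on the projections of $\Z^d\cap B_{C_0}$ being $\lambda^{-1}$-dense in the corresponding ball of each eigenspace for some fixed $C_0$, which in turn uses that the eigenspaces of a hyperbolic integer matrix are irrational with respect to the lattice.
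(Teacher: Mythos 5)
Your reduction and your word construction are sound, and your route is genuinely different from the paper's. You aim at the purely combinatorial estimate $|x|_G = O(\log\|x\|)$ for $x \in H$, realized through the identity $z^r y z^{-r} = A^r y$ and the sorted, telescoped word of length $N + 4\max_i|r_i|$ (this part is correct; sorting is legitimate since the factors $A^{r_i}y_i$ commute in $H$). The paper instead argues geometrically: it realizes $G$ as the fundamental group of the suspension manifold with a metric making the suspension flow Anosov, proves $d_{\tilM}((x,0),(0,0)) \le C\log\|x\|$ by flowing for time $t \approx \sigma^{-1}\log\|x\|$ and crossing an exponentially contracted stable (resp.\ unstable) leaf, and only then transfers the bound to the word metric wholesale via the quasi-isometric embedding $i:G \to \tilM$. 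Your approach, if completed, would be more elementary (no Riemannian geometry, no Milnor--\v{S}varc), and it makes the exponential distortion of $H$ in $G$ explicit at the level of words.

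However, there is a genuine gap in the decomposition step: the claimed contraction fails whenever $\Eu$ (or $\Es$) carries more than one eigenvalue modulus. Say $A|_{\Eu}$ has moduli $\mu_{\max} > \mu_{\min} > 1$ (possible already for hyperbolic $A \in SL(3,\Z)$) and $x_k^u$ lies near the slow direction, so that $r_k^+ \approx \log_{\mu_{\min}}\|x_k^u\|$ is forced by your requirement that $A^{-r_k^+}x_k^u$ be bounded. The approximation error of size $\lambda^{-1}$ is then re-expanded by $\|A^{r_k^+}|_{\Eu}\| \approx \mu_{\max}^{r_k^+}$, producing a residual of order $\lambda^{-1}\|x_k^u\|^{\log\mu_{\max}/\log\mu_{\min}} \gg \|x_k^u\|$: the iteration diverges rather than contracts. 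Your hedge toward a per-eigenspace decomposition does not repair this as stated, because a norm-bounded lattice approximant with prescribed projection on a slow block has, in general, components of definite size in the faster blocks, which $A^r$ inflates beyond the current scale; and by discreteness you cannot demand bounded-norm lattice points with arbitrarily small fast-block components \emph{and} a prescribed slow-block projection (your $C_0$ must blow up with the precision). The density fact you isolate is true --- and your irrationality heuristic can be made rigorous: a proper rational $A$-invariant subspace containing $\Es$ would induce an integer matrix on the quotient lattice with all eigenvalue moduli exceeding one, contradicting $|\det A| = 1$ --- but it is not sufficient; one needs genuine multi-scale bookkeeping across blocks (plus care with Jordan blocks and complex pairs). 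This is precisely the technicality the paper's geometric argument sidesteps: its estimate $d_s(\tilphi_t(x,0),\tilphi_t(0,0)) \le e^{-\sigma t}d_s((x,0),(0,0))$ uses only the worst rate $\sigma$ applied to the leafwise distance as a whole, with no rounding to lattice points and hence no error to re-expand. Your argument is complete as written only in the single-rate case (e.g.\ $d = 2$, where each of $\Eu$, $\Es$ is one line).
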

\begin{proof}
    To prove this, we move our study from the group back to the manifold.
    The automorphism $A$ on $H \approx \Z^d$ may be thought of as a
    hyperbolic toral automorphism defining the manifold
    $M = \T^d \times \R / \sim$ under the relation $(x,t+1) \sim (Ax, t)$.
    Define a Riemannian metric on $M$ such that the submanifold $\T^d \times
    \{0\}$ is equipped with the usual flat metric on $\T^d = \R^d / \Z^d$ and
    such that the flow on $M$ defined by $\varphi_t(x,s) = (x,s+t)$ flows at
    unit speed.

    Lift the metric to the universal cover $\tilM = \R^d \times \R$.  The lifted
    flow $\tilphi_t(x,s)=(x,s+t)$ is Anosov and the strong stable manifold
    through the origin, $\Ws(0,0)$, is a linear subspace of $\R^d \times \{0\}$
    corresponding to the stable manifold of $A$.  Suppose $(x,0)$ is a point
    on $\Ws(0,0)$ with $\|x\|>1$.
    Then, there is $\sigma>0$ such that
    \[
        d_s(\tilde \varphi_t(x,0), \tilde \varphi_t(0,0))  \le 
        e^{-\sigma t} d_s((x,0), (0,0))
    \]
    for all $t>0$ where $d_s$ is distance measured along the strong
    stable leaf.
    Choosing $t$ such that $e^{-\sigma t}\|x\| = 1$, one can show that
    $d((x,0), (0,0))  \le  \tfrac{2}{\sigma} \log \|x\| + 1$.
    A similar formula holds for points on the same unstable leaf.  As the
    stable and unstable foliations are linear and transverse, it follows that
    there is a constant $C>0$ such that 
    $d((x,0),(0,0))  \le  C \log \|x\|$ for any $x \in \R^d$ with $\|x\| > 1$.

    The embedding $i:G \to \tilM$, $x \cdot z^k \mapsto (x,k)$ agrees with the
    standard method of embedding a fundamental group in the universal cover.
    In particular, it
    is a quasi-isometric function and there is $Q>1$ such that
    %
    %
    \[    
        d_G(x,0) < Q\, d_{\tilM}(i(x), i(0)) < C Q \log \|x\|
    \]
    for all non-zero $x \in H$.
    From here it is straightforward to show that
    $
        d_G(x \cdot z^k, 0) < C Q \log(\lambda) \ell + h
    $
    for all $x \cdot z^k \in B(\ell,h)$, completing the proof.
\end{proof}
Lemma \ref{polylh}, with the discussion preceeding it, concludes the proof of
Proposition \ref{toralstretch}.
We have shown that a manifold constructed as the suspension of a
hyperbolic toral automorphism does not have an expanding quasi-isometric
foliation.  This completes case (3) of Theorem \ref{main} and also part of
Theorem \ref{fundy}, since the application of Proposition \ref{growpoly} depends
only on the group $\pi_1(M)$ and not the manifold itself.  To finish the proof
of Theorem \ref{fundy},
we consider groups coming from hyperbolic manifolds.


\begin{prop}
    The fundamental group of a $d$-dimensional manifold of
    constant negative curvature ($d  \ge  3$) is unstretchable.
      \end{prop}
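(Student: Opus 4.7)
The plan is to apply Mostow rigidity, as in the direct proof of case~(1) of Theorem~\ref{main}. For $d \ge 3$, every automorphism $\phi$ of $G = \pi_1(M)$ is induced by an isometry $g:M\to M$; lifting gives an isometry $\tilg:\tilM \to \tilM$ of hyperbolic $d$-space, and the identity $\phi(\alpha)\tilg(x) = \tilg(\alpha x)$ from the \textbf{Notation} says that $\phi$ is realized by conjugation by $\tilg$ among the deck transformations on $\tilM$.

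I would then move the question from the word metric $d_G$ to the hyperbolic metric on $\tilM$. Fix a basepoint $p \in \tilM$ and consider the orbit map $\iota:G \to \tilM$, $\alpha \mapsto \alpha p$. Since $G$ acts properly and cocompactly by isometries on $\tilM$, this map is a quasi-isometry, so there is $Q>0$ with
\[
    Q\inv d_G(\alpha,\beta) - Q \le d_{\tilM}(\alpha p, \beta p) \le Q\, d_G(\alpha,\beta) + Q
\]
for all $\alpha,\beta \in G$. Consequently, polynomial growth of $\diam_G(A_k)$ is equivalent to polynomial growth of $\diam_{\tilM}(\iota(A_k))$, and it suffices to bound the latter.

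The advantage is that on the orbit, $\phi$ acts through the genuine isometry $\tilg$. For any $A \subset G$,
\[
    \tilg(\iota(A)) = \{\tilg\alpha p : \alpha \in A\} = \{\phi(\alpha)\tilg p : \alpha \in A\}
\]
has exactly the same $\tilM$-diameter as $\iota(A)$. Since each deck transformation $\phi(\alpha)$ is itself an isometry, $d_{\tilM}(\phi(\alpha) p, \phi(\alpha)\tilg p) = d_{\tilM}(p,\tilg p)$, so $\iota(\phi(A))$ and $\tilg(\iota(A))$ have Hausdorff distance at most $d_{\tilM}(p,\tilg p)$. This gives
\[
    \diam_{\tilM}(\iota(\phi(A))) \le \diam_{\tilM}(\iota(A)) + 2\, d_{\tilM}(p,\tilg p).
\]
Similarly, since each of the finitely many chosen generators of $G$ moves $p$ a bounded amount, the enlargement $U_N$ increases $\diam_{\tilM}(\iota(\cdot))$ by at most a constant $C_N$ depending only on $N$. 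Iterating these two additive bounds yields
\[
    \diam_{\tilM}(\iota(A_k)) \le \diam_{\tilM}(\iota(A_0)) + k\bigl(2\, d_{\tilM}(p,\tilg p) + C_N\bigr),
\]
which is linear in $k$, and translating back through the quasi-isometry $\iota$ gives a linear, hence polynomial, bound on $\diam_G(A_k)$.

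The substantive geometric input is Mostow rigidity; once $\phi$ is realized by an isometry of $\tilM$, diameters cannot stretch by more than a bounded additive amount per step. The main obstacle in writing the argument is really just careful bookkeeping between the word metric on $G$ and the hyperbolic metric on $\tilM$, together with the fact that the action of $\phi$ on $G$ is only isometric after passing to the orbit $\iota(G) \subset \tilM$ and allowing bounded additive error coming from the displacement $d_{\tilM}(p,\tilg p)$.
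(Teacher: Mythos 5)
Your proposal is correct, but it takes a genuinely different route from the paper's proof of this proposition. The paper uses Mostow rigidity only through the corollary that $Out(G)$ is finite: some power $\phi^p$ is then an inner automorphism $x \mapsto g\inv x g$, which moves word distances by at most the additive constant $2|g|$, and combining this with the inclusion $A_{k+p} \subset U_{N'}(\phi^p(A_k))$ gives the polynomial (in fact linear) bound entirely inside the word metric on $G$. You instead realize each automorphism $\phi$ itself by a lifted isometry $\tilg$ of $\tilM$ (using asphericity plus Mostow, exactly as in the paper's direct proof of case~(1) of Theorem~\ref{main}; note that the conjugacy ambiguity flagged in the paper's remark on $Out(G)$ is absorbed by the choice of lift, since replacing $\tilg$ by $\gamma\inv\tilg$ composes $g_*$ with an arbitrary inner automorphism), and then transfer the diameter estimate through the orbit quasi-isometry $\iota:\alpha \mapsto \alpha p$. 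Your Hausdorff-distance computation comparing $\iota(\phi(A))$ with $\tilg(\iota(A))$ is sound, and the resulting bound $\diam_{\tilM}(\iota(A_k)) \le \diam_{\tilM}(\iota(A_0)) + k\,(\text{const})$ is linear, matching the paper's conclusion. What each approach buys: yours avoids the finiteness of $Out(G)$ (you need Mostow only one automorphism at a time, not uniformly over the outer automorphism group) and works directly in the hyperbolic metric, which is conceptually closer to the paper's proof of the first proposition; the paper's argument is purely group-theoretic once the single fact ``$Out(G)$ is finite'' is granted, so it applies verbatim to any finitely generated group with finite outer automorphism group, which is what makes the stated extension to more general locally symmetric spaces immediate. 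One cosmetic slip: the $U_N$ step should increase the diameter by at most $2C_N$ (both endpoints can move by $C_N$), not $C_N$, but this only changes the constant in your linear bound.
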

\begin{proof}
    Let $G$ be such a group, and
    let $\phi$, $N$, and the sequence $\{A_k\}$ be as in Proposition
    \ref{growpoly}.
    For a subset $A \subset G$, note that
    $
        \phi(U_N(A)) \subset U_C(\phi(A))
    $
    where $C = \max \{ |\phi(x)| : x \in G, |x|  \le  N \}$.  Therefore, for any
    $p>0$,
    $
        A_{k+p} \subset U_{N'} (\phi^p(A_k))
    $
    for some integer $N'$ depending on $p$, $N$, and $C$.

    As a consequence of Mostow rigidity, the group of outer automorphisms,
    $Out(G)$, is finite (see remark below).
    Hence, there
    is $p$ such that $\phi^p$ is an inner automorphism
    $x \mapsto g \inv x g$.
    For $x,y \in G$,
    \[
        d(\phi(x),\phi(y)) = |g \inv x \inv g   g \inv y g|
         \le  |g| + |x \inv y| + |g|
        = d(x,y) + 2 |g|,
    \]
    Thus, $A_{k+p} \subset U_{N'} (U_{2|g|} (A_k) )$ from which it follows that
    $\{A_k\}$ grows at most polynomially.
\end{proof}
\begin{remark}
    The fact that $Out(G)$ is finite is well known to those studying
    rigidity.
    However, I was unable to find a citable elementary proof.
    For readers not familiar with the result,
    I give an outline of the proof here.

    As $M$ is aspherical, an automorphism $\phi$ of $\pi_1(M, x_0)$ is induced
    by a homotopy equivalence $h:(M,x_0) \to (M,x_0)$. By Mostow rigidity, $h$
    is homotopic to an isometry $g:(M,x_0) \to (M,x_0)$.  As this homotopy
    does not preserve the base point $x_0$, the automorphisms $\phi=h_*$ and
    $g_*$ are conjugate, but not necessarily identical.
    Now choose paths $\alpha_i$ ($i \in \{1,\ldots,n\}$) which represent the
    generators of $\pi_1(M,x_0)$.
    For each $i$, the path $g \circ \alpha_i$ is the same length as $\alpha_i$
    and so there is a finite number of possibilities for the element of
    $\pi_1(M, x_0)$ which it represents.  Hence, there are only a finite number
    of possibilities for $g_*$.
\end{remark}

\appendix 
\section{Non-uniform expansion}

Suppose a diffeomorphism $f:M \to M$ has an invariant, one-dimensional foliation
$W$.
By Oseledets theorem, there is a full probability set $R \subset M$ such that
for $x \in R$, the Lyapunov exponent
\[
    \lambda^W(x) := \lim_{|n| \to \infty} \frac{1}{n} \log \|Tf^n|_{TW(x)}\|
\]
exists \cite{Oseledets-proof}.

\begin{prop} \label{zeroset}
    Suppose $f:M \to M$ is a diffeomorphism of a manifold with unstretchable
    fundamental group, and $W$ is an invariant quasi-isometric foliation.
    Then,
    \[
        R' := \{ x \in R : \lambda^W(x)  \ne  0 \}
    \]
    intersects each leaf of $W$ in a set of (one-dimensional) Lebesgue measure
    zero.

    Moreover, if $W$ is absolutely continuous, then $R'$ has Lebesgue
    measure zero as a subset of $M$.
\end{prop}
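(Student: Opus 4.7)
The plan is to argue by contradiction, combining the polynomial-growth bound of Proposition \ref{growpoly} with the non-uniform expansion supplied by Oseledets. Suppose that some leaf $L$ of $W$ meets $R'$ in a set of positive one-dimensional Lebesgue measure. Replacing $f$ by $f^{-1}$ if necessary (which preserves both the unstretchability of $\pi_1(M)$ and the quasi-isometry of $W$) and restricting to a positive-measure subset on which $\lambda^W \geq \delta$ for some fixed $\delta > 0$, Egorov's theorem applied to the pointwise convergence
\[
\tfrac{1}{n} \log \|Tf^n|_{TW(x)}\| \longrightarrow \lambda^W(x)
\]
yields a subset $E_0 \subset L$ of positive one-dimensional measure and an integer $N$ such that $\|Tf^n|_{TW(x)}\| \geq e^{n\delta/2}$ for every $x \in E_0$ and $n \geq N$. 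Enclosing $E_0$ in a compact segment $I \subset L$ of finite arclength,
\[
\length(f^n(I)) \;=\; \int_I \|Tf^n|_{TW(s)}\|\, ds \;\geq\; m(E_0)\, e^{n\delta/2},
\]
so, lifting $I$ to a segment $\tilde I \subset \tilde L$ in a leaf of $\tilW$ with endpoints $\tilde x, \tilde y$, the arclength of $\tilf^n(\tilde I)$ also grows exponentially in $n$.

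The main obstacle is that exponential growth of arclength does not automatically imply exponential growth of either $d_{\tilW}$ or $d_{\tilM}$, since a priori $\tilf^n(\tilde I)$ might wrap around a short leaf. This is resolved by a dichotomy on the topology of $\tilde L$. If $\tilde L$ is homeomorphic to $\R$, then $\tilf^n(\tilde L)$ is a line and $\tilf^n(\tilde I)$ is an embedded subsegment, so its arclength equals $d_{\tilW}(\tilf^n(\tilde x), \tilf^n(\tilde y))$; quasi-isometry promotes this to exponential growth of $d_{\tilM}(\tilf^n(\tilde x), \tilf^n(\tilde y))$, contradicting Proposition \ref{growpoly} applied to the bounded set $\{\tilde x, \tilde y\}$. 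If instead $\tilde L$ is a topological circle, then $\tilf^n(\tilde L)$ is a compact circle leaf whose circumference is at least $\length(\tilf^n(\tilde I))$, so its $\tilW$-diameter grows exponentially, and quasi-isometry forces its $\tilM$-diameter to grow exponentially as well, contradicting Proposition \ref{growpoly} applied to the compact set $\tilde L$. Either way we reach a contradiction, proving the first assertion.

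For the moreover clause, assume $W$ is absolutely continuous and that $R'$ has positive ambient Lebesgue measure. Within any foliation chart, absolute continuity disintegrates the ambient volume as a transversal integral of one-dimensional Lebesgue measures along leaves, so on a positive-transversal-measure family of leaves $L$ the intersection $R' \cap L$ would have positive one-dimensional measure, directly contradicting the first part of the proposition.
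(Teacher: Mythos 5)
Your proposal is correct and follows essentially the same route as the paper's proof (Ma\~n\'e's argument): extract a positive-measure subset of a leaf with uniform exponential expansion (your Egorov step plays the role of the paper's exhaustion $A = \bigcup_k A_k$), deduce exponential growth of leafwise length, and use quasi-isometry to convert this into exponential growth of diameter in $\tilde M$, contradicting Proposition \ref{growpoly}. Your line-versus-circle dichotomy only makes explicit a point the paper compresses into ``by quasi-isometry, the diameter \dots grows exponentially fast,'' and your disintegration argument for the moreover clause matches the paper's remark that it is immediate from its chosen definition of absolute continuity.
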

\begin{remark}
    There are several possible ways to define absolute continuity
    (see, for example, \S 2.6 of \cite{regis-thesis}).
    Here, we take absolute continuity of a foliation to mean
    that any set $X$ which intersects each leaf in a null set, is itself a
    null set on $M$.  Then, the second half of the proposition follows
    immediately from the first half.
\end{remark}
\begin{proof}
    The proof is an adaptation of an idea explained in
    \cite[Proposition 0.5]{baraviera2003}.  There, it is originally
    attributed to Ma\~n\'e.

    Assume the proposition is false for some $f$ and $W$.
    By replacing $f$ with $f \inv$ if necessary,
    we may assume there is a constant $c > 0$ and a precompact subset $A$ of a
    leaf $L$ of $W$ such that $A$ has positive Lebesgue measure and
    $\lambda^W(x) > c$ for all $x \in A$.

    For a positive
    integer $k$, let $A_k$ denote the set of all points $x \in A$ such that
    $
        \frac{1}{n} \log \|Tf^n|_{TW(x)}\| > c
    $
    for all $n > k$.  As $\bigcup A_k = A$, there is $k$ such that $A_k$ has
    positive Lebesgue measure as a subset of $L$.  Further, the Lebesgue
    measure of $f^n(A_k)$ grows exponentially fast.  By quasi-isometry, the
    diameter of $A$ (as a subset of $\tilde M$) grows exponentially fast,
    contradicting Proposition \ref{growpoly}.
\end{proof}
In several cases, non-zero Lyapunov exponents have been used to show that the
center foliations of partially hyperbolic systems are not absolutely
continuous \cite{ShubWilkinson, baraviera2003}.  We show that the same
technique applies here.

Let $m$ be a measure equivalent to Lebesgue on a compact manifold $M$.
Let $\Diff^1_m(M)$ denote all $C^1$ diffeomorphisms on $M$ which preserve $m$,
and
let the subset $\PH^1_m(M)$
denote partially hyperbolic diffeomorphisms with one-dimensional center.
$\PH^1_m(M)$ is open with respect to the $C^1$ topology on
$\Diff^1_m(M)$.

If $f \in \PH^1_m(M)$ has a center foliation $W^c_f$ which satisfies a technical
condition known as \emph{plaque expansiveness},
it follows that
there is a neighbourhood
$U$ of $f$ such that every $g \in U$ also has a center foliation $W^c_g$.
Moreover, the foliations are equivalent; there is a homeomorphism $h$
(depending on $g$) taking leaves of $W^c_f$ to those of $W^c_g$.
Plaque expansiveness can be established in many specific cases, and it is an
open question if all center foliations are plaque expansive (see
\cite{Hammerlindl}, \cite{RHRHU-survey}, and \cite{HPS} for more details).

\begin{prop}
    Suppose $M$ has unstretchable fundamental group, and $f \in \PH^1_m(M)$.
    Further, suppose $W^c_f$ exists and is plaque expansive and
    quasi-isometric.
    Then, for an open and dense set of $g$ close to $f$, $W^c_g$ is not
    absolutely continuous.
    To be precise, there are open subsets $U,V \subset \PH^1_m(M)$ such that
    $f \in U \subset \bar V$ and $W^c_g$ is not absolutely continuous
    for all $g \in V$.
      \end{prop}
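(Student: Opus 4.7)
The plan is to combine Proposition~\ref{zeroset} with the perturbation technique of Shub--Wilkinson and Baraviera--Bonatti. The strategy: arrange that an open and dense set of perturbations of $f$ within $\PH^1_m(M)$ exhibit a positive $m$-measure set of nonzero center Lyapunov exponents, and then invoke Proposition~\ref{zeroset} to conclude failure of absolute continuity on that open dense set.

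First, I would use plaque expansiveness of $W^c_f$ to fix an open neighborhood $U \subset \PH^1_m(M)$ of $f$ on which every $g$ admits a center foliation $W^c_g$, leaf conjugate to $W^c_f$ via a homeomorphism $h_g$ tending to the identity in $C^0$ as $g \to f$. I would then argue that, after further shrinking $U$, every $g \in U$ has a quasi-isometric center foliation with a uniform constant $Q$. For compatible choices of lifts, the lift $\tilde h_g$ of $h_g$ to $\tilM$ lies at bounded distance from the identity, and the local geometry of center leaves varies continuously with $g$, so intrinsic leaf distances on $W^c_g$ are comparable to those on $W^c_f$ up to controllable additive and multiplicative errors. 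Together these yield quasi-isometry of $\tilde W^c_g$ with a common constant.

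Next, I would invoke the Baraviera--Bonatti perturbation argument, which produces, within any neighborhood of any $g_0 \in U$, a diffeomorphism $g_1 \in \PH^1_m(M)$ for which the integrated center Lyapunov exponent $\int \lambda^{W^c_{g_1}}(x)\, dm(x)$ is nonzero. Because the center is one-dimensional, ergodic decomposition then gives a set of positive $m$-measure where $\lambda^{W^c_{g_1}} \neq 0$. Since nonvanishing of the integrated center exponent is an open condition in the $C^1$ topology (via upper semicontinuity of the top Lyapunov exponent together with the one-dimensionality of $E^c$), the set of such $g_1$ contains an open dense subset $V \subset U$.

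Finally, for each $g \in V$ both hypotheses of the second half of Proposition~\ref{zeroset} are met --- $\pi_1(M)$ is unstretchable and $W^c_g$ is quasi-isometric --- while by construction $\{x : \lambda^{W^c_g}(x) \ne 0\}$ has positive $m$-measure. Hence $W^c_g$ cannot be absolutely continuous, and $U \subset \bar V$ as required. The main obstacle will be the first step: plaque expansiveness guarantees the existence of $W^c_g$ for $g$ near $f$, but it does not automatically imply that $W^c_g$ inherits quasi-isometry with a uniform constant, since the leaf conjugacy $h_g$ can a priori distort intrinsic leaf distances even when it is $C^0$-close to the identity on $M$. The openness of nonvanishing integrated center exponent in step two is also somewhat delicate but is by now standard in the conservative partially hyperbolic setting.
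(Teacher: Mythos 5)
Your proposal is correct and follows essentially the same route as the paper's proof: plaque expansiveness gives the neighbourhood $U$ on which every $g$ has a center foliation leaf-conjugate to $W^c_f$ (hence quasi-isometric), $V$ is the set where the integrated center exponent is nonzero (open, and dense by Baraviera--Bonatti), and the Birkhoff/ergodic-decomposition step plus Proposition~\ref{zeroset} conclude. Two minor simplifications the paper makes where you were more cautious: openness of $V$ needs no semicontinuity of Lyapunov exponents, since for a one-dimensional center the integrated exponent equals $\lambda^c(g)=\int_M \log\|Tg|_{E^c_g(x)}\|\,dm(x)$, which is plainly continuous in $g$; and no uniform quasi-isometry constant over $U$ is required, because Proposition~\ref{zeroset} is applied to each $g$ separately with whatever constant its own foliation carries.
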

\begin{proof}
    Let $U$ be the open neighbourhood of $f$ given by plaque expansiveness
    \cite{HPS}.  In particular, there exists a foliation $W^c_g$ tangent to the
    center direction $E^c_g$ for every $g \in U$.  As this foliation is
    equivalent to $W^c_f$, it is also quasi-isometric.
    For $g \in U$, define
    \[
        \lambda^c(g) := \int_M \log \|Tg|_{E^c_g(x)}\| d m(x)
    \]
    and $V := \{ g \in U : \lambda^c(g)  \ne  0 \}$.
    As the function $g \mapsto \lambda^c(g)$ is continuous, $V$ is open.
    It follows from \cite[Proposition 0.3]{baraviera2003} that $V$ is dense in
    $U$.
    Suppose $g \in V$.
    By the Birkhoff ergodic theorem,
    \[
        \lambda^c(x) := \lim_{|n| \to \infty} \frac{1}{n} \log \|Tg^n|_{E^c_g(x)}\|
    \]
    is defined almost everywhere and 
    $
        \int_M \lambda^c(x) = \lambda^c(g).
    $
    Therefore, $\lambda^c(x)$ is non-zero on a positive measure set, and by
    Proposition \ref{zeroset}, $W_g^c$ is not absolutely continuous.
\end{proof}
\bigskip

\acknowledgement
The author thanks Alex Eskin, Benson Farb, Ali Tahzibi, Charles Pugh, and Amie
Wilkinson for helpful conversations.


\bibliographystyle{plain}
\bibliography{dynamics}

\begin{thebibliography}{10}

\bibitem{baraviera2003}
A.~Baraviera and C.~Bonatti.
\newblock Removing zero lyapunov exponents.
\newblock {\em Ergod.~Th.~and Dynam.~Sys.}, 23:1655--1670, 2003.

\bibitem{Brin}
M.~Brin.
\newblock On dynamical coherence.
\newblock {\em Ergod.~Th.~and Dynam.~Sys.}, 23:395--401, 2003.

\bibitem{BBI2}
M.~Brin, D.~Burago, and S.~Ivanov.
\newblock Dynamical coherence of partially hyperbolic diffeomorphisms of the
  3-torus.
\newblock {\em Journal of Modern Dynamics}, 3(1):1--11, 2009.

\bibitem{BW-dc}
K.~Burns and A.~Wilkinson.
\newblock Dynamical coherence and center bunching.
\newblock {\em Discrete and Continuous Dynamical Systems}, 22(1\&2):89--100,
  2008.

\bibitem{Fenley}
S.~R. Fenley.
\newblock Quasi-isometric folations.
\newblock {\em Topology}, 31(3):667--676, 1992.

\bibitem{ghys-codim}
{\'E}.~Ghys.
\newblock Codimension one {A}nosov flows and suspensions.
\newblock In {\em Dynamical systems, Valparaiso 1986}, volume 1331 of {\em
  Lecture Notes in Math.}, pages 59--72. Springer, Berlin, 1988.

\bibitem{ham-thesis}
A.~Hammerlindl.
\newblock {\em {Leaf conjugacies on the torus}}.
\newblock PhD thesis, University of Toronto, 2009.
\newblock To appear in \emph{Ergod.~Th.~and Dyn.~Sys.}

\bibitem{Hammerlindl}
A.~Hammerlindl.
\newblock Quasi-isometry and plaque expansiveness.
\newblock {\em Canadian Math. Bull.}, 54(4):676--679, 2011.

\bibitem{ham-conseq}
A.~Hammerlindl.
\newblock The dynamics of quasi-isometric foliations.
\newblock {\em Nonlinearity}, 25:1585--1599, 2012.

\bibitem{ham-nil}
A.~Hammerlindl.
\newblock Partial hyperbolicity on 3-dimensional nilmanifolds.
\newblock {\em Discrete and Continuous Dynamical Systems}, 33(8):3641--3669,
  2013.

\bibitem{ham-pgps}
A.~Hammerlindl.
\newblock Polynomial global product structure.
\newblock {\em Proc.~Amer.~.Math.~Soc.}, to appear.

\bibitem{RHRHU-survey}
F.~Rodriguez Hertz, M.~A.~Rodriguez Hertz, and R.~Ures.
\newblock A survey of partially hyperbolic dynamics.
\newblock {\em ``Partially Hyperbolic Dynamics, Lamnations, and Teichm\"uller
  Flow,'' (eds.~G.~Forni, M.~Lyubich,~C.~Pugh and M.~Shub)}, pages 103--112,
  2007.

\bibitem{HPS}
M.~Hirsch, C.~Pugh, and M.~Shub.
\newblock {\em Invariant Manifolds}, volume 583 of {\em Lecture Notes in
  Mathematics}.
\newblock Springer-Verlag, 1977.

\bibitem{Oseledets-proof}
V.~Oseledets.
\newblock A multiplicative ergodic theorem. {L}yapunov characteristic numbers
  for dynamical systems.
\newblock {\em Trans.~Moscow Math.~ Soc.}, 19:197--231, 1968.

\bibitem{Parwani}
K.~Parwani.
\newblock On 3-manifolds that support partially hyperbolic diffeomorphisms.
\newblock {\em Nonlinearity}, 23:589--606, 2010.

\bibitem{ShubWilkinson}
M.~Shub and A.~Wilkinson.
\newblock Pathological foliations and removable zero exponents.
\newblock {\em Invent. Math.}, 139(3):495–--508, 2000.

\bibitem{regis-thesis}
J.~R.~A. Var\~ao Filho.
\newblock {\em {Absolute continuity for diffeomorphisms with non-compact center
  leaves}}.
\newblock PhD thesis, IMPA, 2012.

\end{thebibliography}

\end{document}